\crefname{hypothesis}{Hypothesis}{Hypotheses}
\title{Spectral deferred correction methods for second-order problems\thanks{Submitted to the editors DATE.
}}
\author{Ikrom Akramov\thanks{Lehrstuhl Computational Mathematics, Technische Universität Hamburg, Hamburg, Germany}
\and Sebastian Götschel\footnotemark[2]
\and Michael Minion\thanks{Lawrence Berkeley National Lab, Berkeley, CA 94720, USA}
\and Daniel Ruprecht\footnotemark[2]
\and Robert Speck\thanks{Jülich Supercomputing Centre, Forschungszentrum Jülich GmbH, Jülich, Germany}
}
\newcommand{\RR}{\mathbb{R}}
\newcommand{\Tt}[1]{\mathbf{#1}}
\newcommand{\Td}[1]{#1}
\newcommand{\rev}[1]{{#1}}
\begin{document}

\maketitle

\begin{abstract}
Spectral deferred corrections (SDC) are a class of iterative methods for the numerical solution of ordinary differential equations. 
SDC can be interpreted as a Picard iteration to solve a fully implicit collocation problem, preconditioned with a low order method. 
It has been widely studied for first-order problems, using explicit, implicit or implicit-explicit Euler and other low order methods as preconditioner. 
For first-order problems, SDC achieves arbitrary order of accuracy and possesses good stability properties.
While numerical results for SDC applied to the second-order Lorentz equations exist, no theoretical results are available for SDC applied to second-order problems.

We present an analysis of the convergence and stability properties of SDC using velocity-Verlet as the base method for general second-order initial value problems.
Our analysis proves that the order of convergence depends on whether the force in the system depends on the velocity.
We also demonstrate that the SDC iteration is stable under certain conditions. 
Finally, we show that SDC can be computationally more efficient than a simple Picard iteration or a fourth-order Runge-Kutta-Nyström method.
\end{abstract}

\begin{keywords}
 Spectral deferred corrections (SDC), Picard iteration, collocation method, velocity-Verlet, preconditioner, stability, convergence.
\end{keywords}

\begin{AMS}
  68Q25, 68R10, 68U05
\end{AMS}

\section{Introduction}\label{sec:intro}

Many problems in science and engineering can be modeled using Newton's second law, giving rise to initial value problems of the form
\begin{equation}\label{eq:ode}
	\ddot{x}=f(t, x(t), \dot{x}(t)), \ x(t_{0})=x_{0},\ \dot{x}(t_{0})=\dot{x}_{0},
\end{equation}
where $x:\RR \rightarrow \RR^{d}$, $\Td{f}: \RR \times \RR^{d} \times \RR^{d}\rightarrow \RR^{d}$, and $t_{0}\leq t \leq t_{\mathrm{end}}$. 
Only for very simple problems is it possible to find solutions analytically. 
In most cases, numerical time stepping algorithms must be used to generate approximate
solutions.
A straightforward approach is to rewrite~\eqref{eq:ode} as 
\begin{subequations}
\label{eq:ode_system}
\begin{align}
    \dot{x}(t) &= v(t) \label{eq:ode_system:x}  \\
    \dot{v}(t) &= f(t, x(t), v(t)) \label{eq:ode_system:v}
\end{align}
\end{subequations}
and apply standard methods for first-order ODEs like Runge-Kutta or multi-step methods.
This, however, means treating both the equation for the position $x(t)$ and $v(t)$ in the same way and can forfeit opportunities to improve method performance.

For the harmonic oscillator with $f(t,x(t), v(t)) = -x(t)$, for example, explicit Euler is unconditionally unstable while implicit Euler leads to heavy numerical damping.
By contrast, the symplectic Euler method, which integrates~\eqref{eq:ode_system:x} explicitly and~\eqref{eq:ode_system:v} implicitly (although no implicit solver is required if $f$ does not depend on $v$), is conditionally stable and energy conserving~\cite{HairerEtAl2003}.
A generalization of this approach are Runge-Kutta-Nystrom (RKN) methods that use different Butcher tables for position and velocity~\cite[Sec II.2]{HairerEtAl2002_geometric}.
Derivation of higher-order RKN methods leads to a quickly growing number of order conditions~\cite[Sec III.3.2]{HairerEtAl2002_geometric}.
A widely studied special case~\cite{Blanes2002,ImoniEtAl2006,Houwen1987,Houwen1989} is where $f(t,x(t),v(t)) = f(x(t))$, that is, the right hand side depends only on the position but not on the velocity.
This greatly simplifies order conditions, allowing for easy construction of high order methods with favourable properties~\cite[Sec III.2.3]{HairerEtAl2002_geometric}.
For the general case, however, constructing good high order methods remains a challenge and RKN often struggle to outperform standard Runge-Kutta methods designed for first-order problems~\cite{BaiJunkins2011}.

Spectral deferred corrections (SDC), introduced in 2000 by Dutt et al.~\cite{DuttEtAl2000}, provide an easy way to construct high order methods for first-order problems.
There is a significant amount of theory~\cite{ChristliebEtAl2010_MoC,HagstromZhou2006,HansenStrain2011}, a number of algorithmic improvements~\cite{HuangEtAl2006,Minion2003,Minion2004} and studies of its performance in complex applications~\cite{BouzarthMinion2010,Shen2023}.
For second-order problems, only a special variant based on the Boris integrator~\cite{Boris1970} has been proposed, which is specifically tailored to the Lorentz equations modelling trajectories of charged particles in electro-magnetic fields~\cite{WinkelEtAl2015}.
This Boris-SDC method has been improved~\cite{TretiakRuprecht2019}, used to compute fast ion trajectories in fusion reactors~\cite{TretiakEtAl2021} and studied for other plasma physics problems~\cite{SmedtEtAl2023}.
However, no attempts have been made to adopt SDC for second-order problems other than the Lorentz system and, unlike the first-order case, no theoretical foundation exists.

This paper fills this gap by providing a systematic study of the mathematical properties of second-order SDC, including a proof of consistency and an assessment of stability.
It studies convergence and demonstrates that SDC can compete with a RKN-4 method in terms of computational efficiency.
Section~\ref{sec:method} describes the SDC method for second-order IVPs using a velocity-Verlet integrator as base method.
Section~\ref{sec:stability} investigates stability, using the damped harmonic oscillator as a test problem.
The related issues of stability and convergence of the SDC iteration are discussed and stability domains of SDC are compared against a RKN-4 method and a collocation method using Picard iterations.
Section~\ref{sec:convergence} proves consistency and that each iteration increases the order by two in the case where $f$ does not depend on $v$ but only by one if it does.
The theoretical statements on convergence order are validated against numerical examples.
Finally, Section~\ref{sec:efficiency} compares the computational efficiency of SDC against Picard iteration and RKN-4.
All the numerical examples were produced with the project \texttt{Second\_orderSDC} in the pySDC software~\cite{Speck2019}, which is publicly available~\cite{speck_2024_10637162}.

\section{Spectral deferred corrections for second-order problems}\label{sec:method}
For the sake of notational simplicity, we focus on the autonomous case of equation \eqref{eq:ode_system} since any non-autonomous problem can be transformed into an equivalent autonomous problem~\cite[pages 6-7]{Chicone2006}.
Formulation of second-order SDC as well as notation are based on the description of the Boris-SDC algorithm by Winkel et al.~\cite{WinkelEtAl2015}.

\subsection{Collocation formulation}\label{section:collocation}
Consider~\eqref{eq:ode_system} in integral form
\begin{subequations}\label{eq:Picard_int_eq}
	\begin{align}
		&\Td{x}(t)= \Td{x}_{0}+ \int_{t_{n}}^{t}\Td{v}(s)d s,\\
		&\Td{v}(t)=\Td{v}_{0}+\int_{t_{n}}^{t}\Td{f}(\Td{x}(s), \Td{v}(s))d s,
	\end{align}
\end{subequations}
over a time step $[t_{n}, t_{n+1}]$ with starting values $\Td{x}_{0} \approx \Td{x}(t_{n})$ and $\Td{v}_{0} \approx \Td{v}(t_{n})$.
Then, define a set of quadrature nodes
\begin{equation*}
	t_{n}\leq \tau_{1}< \dots < \tau_{M} \leq t_{n+1},
\end{equation*}
with associated weights
\begin{equation*}
\Delta t q_{m,j}	=\Delta t\int_{0}^{e_{m}}l_{j}(s)d s=\int_{t_{n}}^{\tau_{m}}\bar{l}_{j}(s)ds,  \ m, j=1, \dots, M,
\end{equation*}
where $\Delta t=t_{n+1}-t_{n}$, $l_{j}(s)$ and $\bar{l}_{j}(s),\ j=1,\dots, M$ are Lagrange polynomials corresponding to the quadrature nodes on the intervals $[0,1]$ and $[t_{n}, t_{n+1}]$ respectively.
By $\Td{x}_{j},\ \Td{v}_{j},\ \Td{f_{j}}$ we denote numerical approximations to $\Td{x}(\tau_{j}),\ \Td{v}(\tau_{j})$ and $\Td{f}(\Td{x}(\tau_{j}), \Td{v}(\tau_{j}))$~\cite[p. 211-214]{HairerEtAl1993_nonstiff}. 
Approximating the integrals in~\eqref{eq:Picard_int_eq} using quadrature we obtain
\begin{subequations}\label{eq:coll_main}
\begin{align}\label{eq:coll_main_pos}
&\Td{x}_{m}=\Td{x}_{0}+\Delta t\sum_{j=1}^{M}q_{m,j}\Td{v}_{j},\\\label{eq:coll_main_vel}
&\Td{v}_{m}=\Td{v}_{0}+\Delta t\sum_{j=1}^{M}q_{m,j}\Td{f}_{j},  
\end{align}
\end{subequations}
for $m=1, \dots, M.$
Next, substitute the second equation in~\eqref{eq:coll_main} into the first so that
\begin{subequations}\label{eq:coll_SDC}
\begin{align}\label{eq:coll_SDC_pos}
&\Td{x}_{m}=\Td{x}_{0}+\Delta t \sum_{j=1}^{M}q_{m,j}\Td{v}_{0}+\Delta t^{2}\sum_{j=1}^{M}qq_{m,j}\Td{f}_{j},\\
&\Td{v}_{m}=\Td{v}_{0}+\Delta t\sum_{j=1}^{M}q_{m,j}\Td{f}_{j}\label{eq:coll_SDC_vel}
\end{align}
\end{subequations}
with $qq_{m,j}=\sum_{i=1}^{M}q_{m,i}q_{i,j}$ and $m=1, \ldots, M$.
The $x_m$, $v_m$ correspond to the stages of a fully implicit Runge-Kutta-Nyström method~\cite[pp. 283--300]{HairerEtAl1993_nonstiff}. 
We use Gauss-Legendre nodes throughout this paper, making the collocation method symplectic~\cite[Theorem 4.2]{HairerEtAl2002_geometric}.

\paragraph{Collocation in matrix form} For the purpose of analysis, we will write the $M$ coupled equations~\eqref{eq:coll_SDC} as a single system.
Let $\bar{Q}\in\RR^{M\times M}$ have entries $q_{m,j}$ and let
\begin{equation}
\Tt{V}=(\Td{v}_{0}, \Td{v}_{1}, \dots, \Td{v}_{M})^{T}, \ \Tt{X}=(\Td{x}_{0}, \Td{x}_{1}, \dots, \Td{x}_{M})^{T}\in \RR^{d(M+1)}
\end{equation}
be vectors that contain the approximations at all nodes\footnote{We use boldface variables to indicate values that have been aggregated over multiple quadrature nodes. However, note that non-boldface variables can be vectors, too. For example, $v_1 \in \mathbb{R}^d$ is the velocity at the first quadrature node whereas $\Tt{V} \in \RR^{d(M+1)}$ are the velocities at all quadrature nodes.}. With initial conditions
\begin{equation*}
\Tt{X}_{0}:=(\Td{x}_{0}, \Td{x}_{0}, \dots, \Td{x}_{0})^{T}, \ \Tt{V}_{0}:=(\Td{v}_{0}, \Td{v}_{0}, \dots, \Td{v}_{0})^{T}\in \RR^{d(M+1)}	
\end{equation*}
and $F(\Tt{X}, \Tt{V})=(\Td{f}_{0}, \Td{f}_{1}, \dots, \Td{f}_{M})^{T}\in \RR^{d(M+1)}$ denoting the vector that contains the forces at each node,
equation~\eqref{eq:coll_SDC} can be written compactly as
\begin{subequations}
\label{eq:dis_coll}
\begin{align}
    &\Tt{X}=\Tt{X}_{0}+\Delta t \Tt{Q}\Tt{V}_{0}+\Delta t^{2}\Tt{QQ}F(\Tt{X}, \Tt{V}),\label{eq:dis_coll_pos}\\ 
    &\Tt{V}=\Tt{V_{0}}+\Delta t \Tt{Q}F(\Tt{X}, \Tt{V}) \label{eq:dis_coll_vel}.
\end{align}
\end{subequations}
Here,
\begin{equation*}
Q:=\left(\begin{matrix}
	0 & \Tt{0} \\
	\Tt{0} & \bar{Q}
\end{matrix}\right)\in \RR^{(M+1)\times (M+1)}
\end{equation*}
with $\Tt{0}$ being the $M-$dimensional zero-vector, and
\begin{equation}
\Tt{Q}=Q\otimes \Tt{I}_{d} , \ \Tt{QQ}=(Q\otimes \Tt{I}_{d})\otimes (Q\otimes \Tt{I}_{d})=QQ \otimes \Tt{I}_{d}
\end{equation} 
with $\Tt{I}_{d}$ being the identity matrix of dimension $d$.
Finally, let
\begin{equation*}
    \Tt{U}=(\Tt{X}, \Tt{V}) = (\Td{x}_{0}, \dots, \Td{x}_{M}, \Td{v}_{0}, \dots, \Td{v}_{M})^{T}\in \RR^{2d(M+1)}.
\end{equation*}
Then, the equations~\eqref{eq:dis_coll} can be written as
\begin{equation}
    \label{eq:coll_matrix}
   \Tt{C}_{\textrm{\textrm{coll}}}\Tt{U}_{0} =  \Tt{U} - \Delta t\Tt{Q}_{\textrm{\textrm{coll}}}\Tt{F}(\Tt{U}) =: \Tt{M}_{\textrm{coll}}(\Tt{U}),
\end{equation}
with $\Tt{U}_{0}=(\Td{x}_{0}, \dots, \Td{x}_{0}, \Td{v}_{0},\dots, \Td{v}_{0})^{T},\ \Tt{F}(\Tt{U})=(\Td{f}_{0},\dots, \Td{f}_{M},\Td{f}_{0}, \dots, \Td{f}_{M})^{T}\in \RR^{2d(M+1)}$ and 
\begin{equation}\label{eq:Qcoll}
    \Tt{Q}_{\textrm{coll}}=\left(
    \begin{matrix}
    \Delta t\Tt{QQ} & \Tt{O}\\
    \Tt{O} & \Tt{Q}
    \end{matrix}
    \right), \ 
    \Tt{C}_{\textrm{coll}}=\left(
    \begin{matrix}
    \Tt{I}_{d(M+1)} & \Delta t\Tt{Q}\\
    \Tt{O} & \Tt{I}_{d(M+1)}
    \end{matrix}
    \right)\in \RR^{2d(M+1)\times 2d(M+1)}
\end{equation}
where $\Tt{O}$ denotes $d(M+1)\times d(M+1)-$dimensional matrix with zero entries.
Using~\eqref{eq:coll_matrix} we obtain the collocation problem in operator form
\begin{equation}\label{eq:coll_matrix_final}
    \Tt{M}_{\textrm{coll}}(\Tt{U})=\Tt{C}_{\textrm{coll}}\Tt{U}_{0}.
\end{equation}

Once the stages $x_m$, $v_m$ are known, the approximations at the end of the time step can be computed via
\begin{subequations}\label{eq:up_main}
	\begin{align}\label{eq:up_main_pos}
		&\Td{x}(t_{n+1})\approx x_{n+1}=\Td{x}_{0}+\Delta t\sum_{m=1}^{M}q_{m}\Td{v}_{m},\\\label{eq:up_main_vel}
		&\Td{v}(t_{n+1})\approx v_{n+1}= \Td{v}_{0}+\Delta t \sum_{m=1}^{M}q_{m}\Td{f}_{m}
	\end{align}
\end{subequations}
where
\begin{equation*}
	q_{j}=\int_{0}^{1}l_{j}(s)d s, \ j=1, \dots, M.	
\end{equation*}
Insert~\eqref{eq:coll_SDC_vel} into~\eqref{eq:up_main_pos} to obtain
\begin{subequations}\label{eq:up_SDC}
	\begin{align}
		&\Td{x}_{n+1}= \Td{x}_{0}+\Delta t \Td{v}_{0}+\Delta t^{2}\sum_{m=1}^{M}\sum_{i=1}^{M}q_{i}q_{i,m}\Td{f}_{m},\\
		&\Td{v}_{n+1}= \Td{v}_{0}+\Delta t \sum_{m=1}^{M}q_{m}\Td{f}_{m}
	\end{align}
\end{subequations}
where we use that $\sum_{m=1}^{M}q_{m}=1$ by consistency of the quadrature rule. 
Equations~\eqref{eq:up_SDC} can  again be written in vector form
\begin{subequations}\label{eq:up_vec}
	\begin{align}\label{eq:vec_x}
		&\Td{x}_{n+1}= \Td{x}_0+\Delta t \Tt{q} \Tt{V}_{0}+\Delta t^{2} \Tt{q Q} F(\Tt{X}, \Tt{V})\\\label{eq:vec_v}
		&\Td{v}_{n+1}= \Td{v}_0+\Delta t \Tt{q} F(\Tt{X}, \Tt{V})
	\end{align}
\end{subequations}
where $q:=(0, q_{1}, \dots, q_{M})\in \RR^{1 \times (M+1)}$ and $\Tt{q}:=q\otimes \Tt{I}_{d}\in \RR^{d\times d(M+1)}$.
This is the collocation problem for second-order IVPs that our SDC method will solve iteratively.

\subsection{Velocity--Verlet scheme}
We use velocity--Verlet integration~\cite{Verlet1967} as the low-order base method for the SDC iteration for a second-order IVP. 
Applying velocity--Verlet to~\eqref{eq:ode_system} with time steps $\tau_{0}, \dots, \tau_{M}$ gives
\begin{subequations}\label{eq:VV}
\begin{align}
    &\Td{x}_{m+1}=\Td{x}_{m}+\Delta \tau_{m+1}\left(\Td{v}_{m}+\frac{\Delta \tau_{m+1}}{2}\Td{f}_{m}\right),\label{eq:vV_pos}\\
    &\Td{v}_{m+1}=\Td{v}_{m}+\frac{\Delta \tau_{m+1}}{2}(\Td{f}_{m}+\Td{f}_{m+1})\label{eq:vV_vel}
\end{align}
\end{subequations}
where $\Delta \tau_{m+1}=\tau_{m+1}-\tau_{m},\ m=0,\dots, M-1$.
To obtain a matrix formulation for~\eqref{eq:vV_pos}, we convert it into
\begin{subequations}\label{eq:vV_rec}
\begin{align}
    &\Td{x}_{m+1}=\Td{x}_{0}+\sum_{l=1}^{m+1}\Delta \tau_{l}\Td{v}_{l-1}+\frac{1}{2}\sum_{l=1}^{m+1}(\Delta \tau_{l})^{2}\Td{f}_{l-1},\label{eq:vV_rec_pos}\\
    &\Td{v}_{m+1}=\Td{v}_{0}+\frac{1}{2}\sum_{l=1}^{m+1}\Delta \tau_{l}(\Td{f}_{l-1}+\Td{f}_{l}).\label{eq:vV_rec_vel}
\end{align}
\end{subequations}
These equations can be rearranged into vector form by defining
\begin{equation*}\label{eq:QeQI}
Q_{\textrm{E}}:=\frac{1}{\Delta t}\left(
\begin{matrix}
0 & 0 & 0 & \dots & 0\\
\Delta \tau_{1}& 0 & 0& \dots &0\\
\Delta \tau_{1} & \Delta \tau_{2}& 0& \dots& 0\\
\vdots& \vdots & \ddots & \ddots &\vdots\\
\Delta \tau_{1}& \Delta \tau_{2}& \dots& \Delta \tau_{M}& 0
\end{matrix}
\right), \    
Q_{\mathrm{I}}:=\frac{1}{\Delta t}\left(
\begin{matrix}
0 & 0 &0 & \dots & 0\\
0 &\Delta \tau_{1}& 0 & \dots& 0 \\
0 &\Delta \tau_{1} & \Delta \tau_{2}& \dots& 0\\
\vdots& \vdots & \ddots & \ddots &\vdots\\
0&\Delta \tau_{1}& \Delta \tau_{2}& \dots& \Delta \tau_{M}
\end{matrix}
\right)
\end{equation*}
and 
\begin{equation}\label{eq:Qt}
Q_{\textrm{T}}:=\frac{1}{2}(Q_{\textrm{E}}+Q_{\mathrm{I}})\in \RR^{(M+1)\times (M+1)}.
\end{equation}
Then,~\eqref{eq:vV_rec} becomes
\begin{subequations}
\begin{align}
    &\Tt{X}=\Tt{X}_{0}+\Delta t\Tt{Q}_{\textrm{E}}\Tt{V}+\frac{\Delta t^{2}}{2}(\Tt{Q}_{\textrm{E}}\circ\Tt{Q}_{\textrm{E}})F(\Tt{X}, \Tt{V}),\label{eq:vV_vec_pos}\\
    &\Tt{V}=\Tt{V}_{0}+\Delta t\Tt{Q}_{\textrm{T}}F(\Tt{X},\Tt{V})\label{eq:vV_vec_vel}
\end{align}
\end{subequations}
with $\circ$ denoting the Hadamard product (element--wise product of two matrices). 
We substitute the expression for $\Tt{V}$ from~\eqref{eq:vV_vec_vel} into~\eqref{eq:vV_vec_pos} so that
\begin{equation*}
    \Tt{X}=\Tt{X}_{0}+\Delta t\Tt{Q}_{\textrm{E}}\Tt{V}_{0}+\Delta t^{2}\Tt{Q}_{\textrm{x}}F(\Tt{X}, \Tt{V})
\end{equation*}
where
\begin{equation}\label{eq:Qx}
\Tt{Q}_{\textrm{x}}:=Q_{\textrm{x}}\otimes \Tt{I}_{d} \mbox{ with } 
Q_{\textrm{x}}:=Q_{\textrm{E}}Q_{\textrm{T}}+\frac{1}{2}(Q_{\textrm{E}}\circ Q_{\textrm{E}}).
\end{equation}
In order to combine both equations into a compact form based on $\Tt{U}$, we set
\begin{equation*}
\Tt{C}_{\textrm{vv}}:=\left(
\begin{matrix}
	\Tt{I}_{d(M+1)} & \Delta t\Tt{Q}_{\textrm{E}}\\
	\Tt{O} & \Tt{I}_{d(M+1)}
\end{matrix}
\right),\ 
\Tt{Q}_{\textrm{vv}}:=\left(
\begin{matrix}
	\Delta t\Tt{Q}_{\textrm{x}} & \Tt{O}\\
	\Tt{O} & \Tt{Q}_{\textrm{T}}
\end{matrix}
\right)	
\end{equation*}
Finally, the matrix representation of the velocity--Verlet scheme is 
\begin{equation*}
    \Tt{U}=\Tt{C}_{\textrm{vv}}\Tt{U}_{0}+\Delta t \Tt{Q}_{\textrm{vv}}\Tt{F}(\Tt{U})
\end{equation*}
or 
\begin{equation}
    \label{eq:m_vv}
    \Tt{M}_{\textrm{vv}}(\Tt{U}):= \Tt{U} - \Delta t \Tt{Q}_{\textrm{vv}} \Tt{F}(\Tt{U}) = \Tt{C}_{\textrm{vv}}\Tt{U}_{0}.
\end{equation}
We will use $\Tt{M}_{\textrm{vv}}(\cdot)$ as preconditioner for the Picard iteration to solve~\eqref{eq:coll_matrix_final}.
As shown below, $\Tt{M}_{\textrm{vv}}(\cdot)$ is easy to invert by a ``sweep'' with velocity Verlet through all the nodes.

\subsection{Spectral deferred corrections (SDC)}
Applying a Richardson iteration~\cite{Kelley1995} to~\eqref{eq:coll_matrix_final} gives 
\begin{equation}\label{eq:Picard}
    \Tt{U}^{k+1}=(\Tt{I}_{2d(M+1)}-\Tt{M}_{\textrm{coll}})(\Tt{U}^{k})+\Tt{C}_{\textrm{coll}}\Tt{U}_{0}=\Tt{C}_{\textrm{coll}}\Tt{U}_{0}+\Delta t\Tt{Q}_{\textrm{coll}}\Tt{F}(\Tt{U}^{k})
\end{equation}
where $k=0, \dots, K$ is the iteration index and we use $\Tt{U}^{0}$ to start the iteration.
\begin{proposition}\label{co:Picard}
Let $\Td{f}$ be a Lipschitz continuous function with Lipschitz constant $L$ and $\Delta t$ sufficiently small so that $\Delta t L\|\Tt{Q}_{\textrm{coll}}\|<1$ . 
Then,~\eqref{eq:Picard} converges to the collocation solution for all starting values $\Tt{U}_{0}$.
\end{proposition}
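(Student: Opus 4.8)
The plan is to recognize \eqref{eq:Picard} as a fixed-point iteration and to invoke the Banach fixed-point theorem (contraction mapping principle). First I would introduce the iteration map
\[
\Phi(\Tt{U}) := \Tt{C}_{\textrm{coll}}\Tt{U}_{0} + \Delta t\,\Tt{Q}_{\textrm{coll}}\Tt{F}(\Tt{U}),
\]
so that \eqref{eq:Picard} reads simply $\Tt{U}^{k+1} = \Phi(\Tt{U}^{k})$, and observe that by \eqref{eq:coll_matrix_final} the fixed points of $\Phi$ are \emph{exactly} the solutions of the collocation problem. It therefore suffices to prove that $\Phi$ is a contraction on $\RR^{2d(M+1)}$, a complete space under any norm.

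The central estimate is the following: for arbitrary $\Tt{U}, \Tt{W} \in \RR^{2d(M+1)}$ the constant term $\Tt{C}_{\textrm{coll}}\Tt{U}_{0}$ cancels, leaving
\[
\|\Phi(\Tt{U}) - \Phi(\Tt{W})\| = \Delta t\,\bigl\|\Tt{Q}_{\textrm{coll}}\bigl(\Tt{F}(\Tt{U}) - \Tt{F}(\Tt{W})\bigr)\bigr\| \le \Delta t\,\|\Tt{Q}_{\textrm{coll}}\|\,\|\Tt{F}(\Tt{U}) - \Tt{F}(\Tt{W})\|.
\]
The key step is then to bound $\|\Tt{F}(\Tt{U}) - \Tt{F}(\Tt{W})\|$ by $L\,\|\Tt{U} - \Tt{W}\|$. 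Since $\Tt{F}(\Tt{U})$ merely stacks the nodewise evaluations $\Td{f}(\Td{x}_{j}, \Td{v}_{j})$ (and duplicates them for the position and velocity blocks), and $\Td{f}$ is Lipschitz with constant $L$ in its arguments, I would argue that the aggregated operator $\Tt{F}$ inherits the same Lipschitz constant $L$ in the chosen norm. Combining this with the display above gives
\[
\|\Phi(\Tt{U}) - \Phi(\Tt{W})\| \le \Delta t\,L\,\|\Tt{Q}_{\textrm{coll}}\|\,\|\Tt{U} - \Tt{W}\|,
\]
and the hypothesis $\Delta t L\|\Tt{Q}_{\textrm{coll}}\| < 1$ makes the factor strictly less than one, so $\Phi$ is a contraction.

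Having established that $\Phi$ is a contraction on the complete space $\RR^{2d(M+1)}$, the Banach fixed-point theorem yields a unique fixed point $\Tt{U}^{\ast}$ together with convergence $\Tt{U}^{k} \to \Tt{U}^{\ast}$ for every initial guess $\Tt{U}^{0}$, at the geometric rate $(\Delta t L\|\Tt{Q}_{\textrm{coll}}\|)^{k}$. Because the fixed point coincides with the collocation solution of \eqref{eq:coll_matrix_final}, the claim follows.

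I expect the main obstacle to be the bookkeeping in the middle step: verifying that the block-structured operator $\Tt{F}$ is Lipschitz with constant \emph{exactly} $L$, rather than a dimension- or node-dependent multiple, requires fixing a compatible norm on $\RR^{2d(M+1)}$. In particular, one must choose a norm under which neither the duplication of the force blocks nor the joint dependence of $\Td{f}$ on $(x,v)$ introduces extra constants (a blockwise maximum norm is a natural candidate), and this must be the same norm implicitly used to define $\|\Tt{Q}_{\textrm{coll}}\|$ in the hypothesis. Once the norm is pinned down consistently, the remainder is a routine application of the contraction mapping principle.
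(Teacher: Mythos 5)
Your proposal is correct and takes essentially the same route as the paper: the paper subtracts consecutive iterates to obtain $\|\Tt{U}^{k+1}-\Tt{U}^{k}\|\leq \Delta t L\|\Tt{Q}_{\textrm{coll}}\|\,\|\Tt{U}^{k}-\Tt{U}^{k-1}\|$ and concludes by the standard fixed-point argument, which is exactly the contraction estimate at the heart of your Banach fixed-point formulation. Your version is marginally more explicit in identifying the limit with the collocation solution and in flagging the norm-compatibility issue for $\Tt{F}$, but the underlying argument is the same.
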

\begin{proof}
	Subtracting~\eqref{eq:Picard} for $k+1$ and $k$ yields 
	\begin{equation*}
		\Tt{U}^{k+1}-\Tt{U}^{k}=\Delta t \Tt{Q}_{\textrm{coll}}(\Tt{F}(\Tt{U}^{k})-\Tt{F}(\Tt{U}^{k-1})).
	\end{equation*}
	Applying a norm and using Lipschitz continuity gives us
	\begin{equation*}
		\|\Tt{U}^{k+1}-\Tt{U}^{k}\|\leq \Delta t L\|\Tt{Q}_{\textrm{coll}}\|\|\Tt{U}^{k}-\Tt{U}^{k-1}\|.
	\end{equation*}
	Since $\Delta t L\|\Tt{Q}_{\textrm{coll}}\|<1$, the iteration converges~\cite[p. 1-10]{Praveen2018}.
\end{proof}
Often, the Picard iteration converges only for an impractically small time step. 
To improve convergence, we use $\Tt{M}_{\textrm{vv}}$ as a preconditoner~\cite{Kelley1995}, leading to
\begin{equation}\label{eq:pre_picard}
    \Tt{M}_{\textrm{vv}}(\Tt{U}^{k+1})=(\Tt{M}_{\textrm{vv}}-\Tt{M}_{\textrm{coll}})(\Tt{U}^{k})+\Tt{C}_{\textrm{coll}}\Tt{U}_{0}.
\end{equation}
Each iteration requires solving a linear or non--linear system of equations, depending on the right--hand function $\Td{f}$ in~\eqref{eq:ode_system}, to invert $\Tt{M}_{\textrm{vv}}$.
However, the structure of $\Tt{M}_{\textrm{vv}}$ allows to be done by ``sweeping'' through the quadrature nodes seqentially.
Using~\eqref{eq:m_vv} and~\eqref{eq:coll_matrix} we obtain the operator form of the SDC iteration for second-order equations
\begin{align}
    &(\Tt{I}_{2d(M+1)}-\Delta t\Tt{Q}_{\textrm{vv}}\Tt{F})(\Tt{U}^{k+1})=\Delta t(\Tt{Q}_{\textrm{coll}}-\Tt{Q}_{\textrm{vv}})\Tt{F}(\Tt{U}^{k})+\Tt{C}_{\textrm{coll}}\Tt{U}_{0}\label{eq:SDC_iteration}
\end{align}
for $k=0,\dots, K$. 
\begin{remark}\label{remark:starting_values}
Typically, the starting value $\Tt{U}^0$ for iteration~\eqref{eq:SDC_iteration} will be generated from the initial value $\Tt{U}_0$ at the beginning of the time step, either by setting $\Tt{U}^0 = \Tt{U}_0$ or by an initial sweep of the velocity Verlet base method to solve
\begin{equation}
    \Tt{M}_{\textrm{vv}}(\Tt{U}^{0}) = \Tt{C}_{\textrm{vv}}\Tt{U}_{0}.
\end{equation}
Our theoretical analysis in Subsection~\ref{subsection:theory} does not make any assumptions about how $\Tt{U}^0$ is generated.
Since the aim of the numerical examples in Subsection~\ref{subsection:numerical_examples} is to validate the theory, we always initialize $\Tt{U}^0$ with random values.
Even a simple copy of $\Tt{U}_0$ was found to lead to convergence orders that are better than what the theory guarantees in some cases.
For the comparison of computational efficiency in Section~\ref{sec:efficiency} we use $\Tt{U}^0 = \Tt{U}_0$.
\end{remark}
For analysis, it will be helpful to split the equations for position and velocity
\begin{subequations}
    \label{eq:SDC_vec}
    \begin{align}
 \Tt{X}^{k+1}-\Delta t^{2}\Tt{Q}_{\textrm{x}}F(\Tt{X}^{k+1}, \Tt{V}^{k+1}) &=\Tt{X}_{0}+\Delta t\Tt{Q}\Tt{V}_{0}+\Delta t^{2}(\Tt{QQ}-\Tt{Q}_{\textrm{x}})F(\Tt{X}^{k}, \Tt{V}^{k}),\label{eq:SDC_vec_pos}\\
    \Tt{V}^{k+1}-\Delta t\Tt{Q}_{\textrm{T}}F(\Tt{X}^{k+1}, \Tt{V}^{k+1}) &=\Tt{V}_{0}+\Delta t(\Tt{Q}-\Tt{Q}_{\textrm{T}})F(\Tt{X}^{k}, \Tt{V}^{k})\label{eq:SDC_vec_vel}.
\end{align}
\end{subequations}
Using definitions~\eqref{eq:Qt} and~\eqref{eq:Qx} we obtain the sweep formulation 
\begin{subequations}
    \label{eq:SDC_comp-wise}
\begin{align}
    \Td{x}_{m+1}^{k+1}=\Td{x}_{0}+\Delta t\sum_{l=0}^{M}q_{m+1,l}\Td{v}_{0}+\Delta t^{2}\sum_{l=0}^{M}q_{m+1,l}^{\textrm{x}}\left(\Td{f}_{l}^{k+1}-\Td{f}_{l}^{k}\right)+\Delta t^{2}\sum_{l=0}^{M}qq_{m+1,l}\Td{f}_{l}^{k},\\
    \Td{v}_{m+1}^{k+1}=\Td{v}_{0}+\Delta t\sum_{l=0}^{M}q_{m+1,l}^{\textrm{T}}\left(\Td{f}_{l}^{k+1}-\Td{f}_{l}^{k}\right)+\Delta t\sum_{l=0}^{M}q_{m+1,l}\Td{f}_{l}^{k}
\end{align}
\end{subequations}
where $m=0, \dots, M$ and $k=0,\dots, K, \ f_{l}^{k}:=f(x_{l}^{k}, v_{l}^{k})$ and $(q_{m,l}^{\textrm{x}})_{m,l=0,\dots, M}$ and $(q_{m,l}^{\textrm{T}})_{m,l=0,\dots, M}$ are the entries of $Q_{\textrm{x}}$ and $Q_{\textrm{T}}$.
By taking the difference between~\eqref{eq:SDC_comp-wise} for $m+1$ and $m$ and exploiting that $Q_{\textrm{T}}$ is lower diagonal and $Q_{\textrm{x}}$ strictly lower diagonal, we get
\begin{subequations}
\label{eq:nn_SDC}
\begin{gather}
    \Td{x}_{m+1}^{k+1}=\Td{x}_{m}^{k+1}+\Delta \tau_{m+1}v_{0}+\Delta t^{2}\sum_{l=0}^{m}s_{m+1,l}^{\textrm{x}}\left(\Td{f}_{l}^{k+1}-\Td{f}_{l}^{k}\right)+\Delta t^{2}\sum_{l=0}^{M} sq_{m+1,l}\Td{f}_{l}^{k},\\
   \Td{v}_{m+1}^{k+1}=\Td{v}_{m}^{k+1}+\frac{\Delta \tau_{m+1}}{2}\left(\Td{f}_{m+1}^{k+1}-\Td{f}_{m+1}^{k}\right)+\frac{\Delta \tau_{m+1}}{2}\left(\Td{f}_{m}^{k+1}-\Td{f}_{m}^{k}\right)+\Delta t\sum_{l=0}^{M}s_{m+1,l}\Td{f}_{l}^{k},
\end{gather}
\end{subequations}
for $m=0,\dots, M-1$ and $k=0, \dots, K.$ Here, 
\begin{equation*}
s_{m,j}:=q_{m,j}-q_{m-1,j}, \ s_{m,j}^{\textrm{x}}:=q_{m,l}^{\textrm{x}}-q_{m-1,l}^{\textrm{x}},\ sq_{m,l}:=qq_{m,l}-qq_{m-1,l}	
\end{equation*}
with $m, j=1, \dots, M.$
The factor in front of $\Td{v}_{0}$ is due to  $\Delta t\sum_{j=0}^M s_{m,j} = \Delta \tau_m$.
Since~\eqref{eq:nn_SDC} is a sweep through the quadrature nodes using a velocity-Verlet method with some additional terms on the right-hand side, implementation is straightforward.
\begin{remark}
If $f$ does not depend on $v$,~\eqref{eq:nn_SDC} is a fully explicit SDC iteration.
\end{remark}

\section{Stability}\label{sec:stability}
Similar to how the Dahlquist equation is used to study stability for first-order problems, we use the damped harmonic oscillator with unit mass
\begin{subequations}\label{eq:harmonic_oscillator}
\begin{align}
    &\dot{x}(t)=\Td{v}(t),\\
    &\dot{v}(t)=\Td{f}(\Td{x}(t), \Td{v}(t)) :=-\kappa \Td{x}(t)-\mu \Td{v}(t)
\end{align}
\end{subequations}
as the test problem to study the stability of second-order SDC.
Here, $\kappa$ is the spring constant and $\mu$ the friction coefficient.
Assuming that $\Tt{M}_{\textrm{vv}} = \Tt{I}_{2(M+1)}-\Tt{Q}_{\textrm{vv}}\Tt{F}$ is invertible, iteration~\eqref{eq:SDC_iteration} becomes
\begin{equation}\label{eq:linear_SDC}
	\Tt{U}^{k+1}=\Tt{K}_{\textrm{sdc}}\Tt{U}^{k}+(\Tt{I}_{2(M+1)}-\Delta t\Tt{Q}_{\textrm{vv}}\Tt{F})^{-1}\Tt{C}_{\textrm{coll}}\Tt{U}_{0},
\end{equation}
where $\Tt{K}_{\textrm{sdc}}:=(\Tt{I}_{2(M+1)}-\Delta t\Tt{Q}_{\textrm{vv}}\Tt{F})^{-1}(\Delta t\Tt{Q}_{\textrm{coll}}-\Delta t\Tt{Q}_{\textrm{vv}})\Tt{F}$.
For fixed $M$ and choice of quadrature nodes, the iteration matrix $\Tt{K}_{\text{sdc}}$ depends only on $\Delta t \kappa$, $\Delta t \mu$.
The iteration matrix is similar to the one in first-order problems~\cite{HuangEtAl2006}.

There are two different but related issues regarding stability of SDC: convergence of the SDC iteration for a single time step as $K \to \infty$ and boundedness of the sequence of approximations $x_n$, $v_n$ generated by subsequent applications of SDC as $n \to \infty$.
\begin{proposition}\label{prop:conv}
	The sequence $\{\Tt{U}^{k}\}$ generated by~\eqref{eq:linear_SDC} converges for any $\Tt{U^{0}}$ and starting values $\Tt{U}_{0}$ if and only if 
	\begin{equation*}
		\rho(\Tt{K}_{\textrm{sdc}}):=\max_{\lambda\in \mathrm{spec}(\Tt{K}_{\textrm{sdc}})}|\lambda|<1.	
	\end{equation*}
\end{proposition}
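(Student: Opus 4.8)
The plan is to treat~\eqref{eq:linear_SDC} as an affine stationary iteration $\Tt{U}^{k+1} = \Tt{K}_{\textrm{sdc}}\Tt{U}^{k} + \Tt{b}$ with the fixed vector $\Tt{b} := (\Tt{I}_{2(M+1)} - \Delta t\Tt{Q}_{\textrm{vv}}\Tt{F})^{-1}\Tt{C}_{\textrm{coll}}\Tt{U}_{0}$, and to reduce its convergence behaviour to that of the matrix powers $\Tt{K}_{\textrm{sdc}}^{k}$. First I would record that any fixed point $\Tt{U}^{\ast}$ of~\eqref{eq:linear_SDC} satisfies $(\Tt{I}_{2(M+1)} - \Delta t\Tt{Q}_{\textrm{coll}}\Tt{F})\Tt{U}^{\ast} = \Tt{C}_{\textrm{coll}}\Tt{U}_{0}$, i.e.\ it is exactly the collocation solution of~\eqref{eq:coll_matrix_final}. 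When $\rho(\Tt{K}_{\textrm{sdc}})<1$ the matrix $\Tt{I}_{2(M+1)}-\Tt{K}_{\textrm{sdc}}$ is nonsingular, so this fixed point exists and is unique. Subtracting the fixed-point relation from~\eqref{eq:linear_SDC} then gives the error recursion $\Tt{E}^{k+1} = \Tt{K}_{\textrm{sdc}}\Tt{E}^{k}$ for $\Tt{E}^{k} := \Tt{U}^{k}-\Tt{U}^{\ast}$, whence $\Tt{E}^{k} = \Tt{K}_{\textrm{sdc}}^{k}\Tt{E}^{0}$. Convergence of $\{\Tt{U}^{k}\}$ for every $\Tt{U}^{0}$ is therefore equivalent to $\Tt{K}_{\textrm{sdc}}^{k}\Tt{E}^{0}\to 0$ for every $\Tt{E}^{0}$.

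For the direction $\rho(\Tt{K}_{\textrm{sdc}})<1 \Rightarrow$ convergence, I would invoke the standard spectral-radius estimate: for every $\varepsilon>0$ there is an induced matrix norm $\|\cdot\|_{\varepsilon}$ with $\|\Tt{K}_{\textrm{sdc}}\|_{\varepsilon}\le\rho(\Tt{K}_{\textrm{sdc}})+\varepsilon$. Choosing $\varepsilon$ small enough that the right-hand side is below one yields $\|\Tt{E}^{k}\|_{\varepsilon}\le\|\Tt{K}_{\textrm{sdc}}\|_{\varepsilon}^{k}\|\Tt{E}^{0}\|_{\varepsilon}\to 0$ geometrically, so $\Tt{U}^{k}\to\Tt{U}^{\ast}$ for any $\Tt{U}^{0}$ and any $\Tt{U}_{0}$. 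Equivalently, one reaches the same conclusion from the Jordan normal form of $\Tt{K}_{\textrm{sdc}}$, since each Jordan block whose eigenvalue has modulus below one has powers tending to zero.

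For the converse I would argue by contraposition and exploit the freedom in the starting data. Running~\eqref{eq:linear_SDC} from two initial guesses $\Tt{U}^{0}$ and $\Tt{U}^{0}+z$ with the same $\Tt{U}_{0}$, the two iterates differ by $\Tt{K}_{\textrm{sdc}}^{k}z$; if every sequence converges, then $\Tt{K}_{\textrm{sdc}}^{k}z$ converges for all $z$, so the powers $\Tt{K}_{\textrm{sdc}}^{k}$ converge to some matrix. If $\rho(\Tt{K}_{\textrm{sdc}})>1$, an eigenvector of a dominant eigenvalue $\lambda$ gives $\Tt{K}_{\textrm{sdc}}^{k}v=\lambda^{k}v$, which is unbounded, a contradiction. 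The delicate case, and the one I expect to be the main obstacle, is an eigenvalue on the unit circle, $|\lambda|=1$, where the powers can remain bounded: if $\lambda\neq 1$ then $\lambda^{k}$ fails to converge, again contradicting convergence of $\Tt{K}_{\textrm{sdc}}^{k}v$, while the borderline value $\lambda=1$ must be excluded using the affine term rather than the powers alone. Writing $\Tt{U}^{k}=\Tt{K}_{\textrm{sdc}}^{k}\Tt{U}^{0}+\sum_{j=0}^{k-1}\Tt{K}_{\textrm{sdc}}^{j}\Tt{b}$, the partial sums grow linearly along the eigenspace of $\lambda=1$ whenever the spectral projection of $\Tt{b}$ onto that eigenspace is nonzero; I would close this case by showing that varying $\Tt{U}_{0}$ produces such a $\Tt{b}$, so that convergence for all starting values $\Tt{U}_{0}$ forces $\lambda=1$ not to be an eigenvalue. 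Together these rule out every eigenvalue of modulus at least one, giving $\rho(\Tt{K}_{\textrm{sdc}})<1$ and completing the equivalence.
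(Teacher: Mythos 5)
Your proposal is, in substance, exactly the proof the paper points to: the paper's entire argument for Proposition~\ref{prop:conv} is a citation of the standard theorem on affine stationary iterations (Hackbusch, Theorem~2.16), and your reduction to the error recursion $\Tt{E}^{k}=\Tt{K}_{\textrm{sdc}}^{k}\Tt{E}^{0}$, the $\varepsilon$-norm (or Jordan form) argument for sufficiency, and the eigenvector argument for necessity are precisely that theorem's proof. So the approach matches the paper's.

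The one step you leave open does deserve a flag, because as sketched it may not be completable. For the borderline eigenvalue $\lambda=1$ you propose to show that varying $\Tt{U}_{0}$ produces a $\Tt{b}=\Tt{M}_{\textrm{vv}}^{-1}\Tt{C}_{\textrm{coll}}\Tt{U}_{0}$ with nonzero spectral projection onto the eigenspace of $1$. But $\Tt{U}_{0}$ ranges only over the $2d$-dimensional subspace of vectors that are constant across the quadrature nodes, so $\Tt{b}$ sweeps out a subspace of dimension at most $2d$ inside $\RR^{2d(M+1)}$, and nothing in the hypotheses guarantees that this subspace has nonzero spectral projection onto the $\lambda=1$ eigenspace. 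The standard resolution --- and the one implicit in the cited theorem --- is to read ``converges'' as convergence to a limit independent of the initial iterate $\Tt{U}^{0}$, i.e.\ to the unique collocation solution, which is plainly the intended meaning here. Under that reading, $\lambda=1$ is excluded by the powers alone: an eigenvector $v$ with $\Tt{K}_{\textrm{sdc}}v=v$ yields two starting iterates $\Tt{U}^{0}$ and $\Tt{U}^{0}+v$ whose sequences differ by the constant $v$ for every $k$ and hence cannot share a limit. With that adjustment your argument closes without any analysis of the affine term.
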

\begin{proof}
	The proof works along the lines of the proof of~\cite[Theorem 2.16]{hackbusch1994iterative}.
\end{proof}
If we identify a pair of positive parameters $\Delta t ( \kappa, \mu)$ with a point in the positive quadrant $\mathbb{R}^2_{+}$, we can define the \emph{convergence domain} of SDC as
\begin{equation}
    \Omega_{\text{conv}} := \left\{ (\Delta t \kappa, \Delta t \mu) \in \mathbb{R}^2_{+} : \rho(\Tt{K}_{\text{sdc}}) < 1 \right\}.
\end{equation}
For a set of parameters inside $\Omega_{\text{conv}}$, SDC will converge to the solution of the collocation problem~\eqref{eq:coll_main} as $k \to \infty$.

To assess stability as $n \to \infty$, we derive the stability function of SDC.
Using induction and~\eqref{eq:SDC_iteration} we find that
\begin{equation*}
	\Tt{U}^{k+1}=\Tt{K}_{\textrm{sdc}}^{k+1}\Tt{U}_{0}+\sum_{j=0}^{k}\Tt{K}_{\textrm{sdc}}^{j}\Tt{M}_{\textrm{vv}}^{-1}\Tt{C}_{\textrm{coll}}\Tt{U}_{0}.
\end{equation*}
Using the geometric series formula we obtain
\begin{equation*}
	\Tt{U}^{k+1}=\Tt{K}_{\textrm{sdc}}^{k+1}\Tt{U}_{0}+(\Tt{I}_{2(M+1)}-\Tt{K}_{\textrm{sdc}}^{k+1})(\Tt{I}_{2(M+1)}-\Tt{K}_{\textrm{sdc}})^{-1}\Tt{M}_{\textrm{vv}}^{-1}\Tt{C}_{\textrm{coll}}\Tt{U}_{0}.
\end{equation*}
This can be slightly simplified to
\begin{equation}\label{eq:SDC_U0}
	 \Tt{U}^{k+1}=\Tt{P}^{k+1}_{\textrm{sdc}}\Tt{U}_{0}
\end{equation}
where
\begin{equation*}
\Tt{P}^{k}_{\textrm{sdc}}:=\Tt{K}_{\textrm{sdc}}^{k}+(\Tt{I}_{2(M+1)}-\Tt{K}_{\textrm{sdc}}^{k})(\Tt{I}_{2(M+1)}-\Tt{K}_{\textrm{sdc}})^{-1}\Tt{M}_{\textrm{vv}}^{-1}\Tt{C}_{\textrm{coll}}.	
\end{equation*}
The final quadrature step~\eqref{eq:up_vec} can also be written in matrix form 
\begin{equation}\label{eq:up_mat}
    \left(\begin{matrix}
    \Td{x}_{n+1}\\
    \Td{v}_{n+1}
    \end{matrix}\right)=\left(\begin{matrix}
    1 & \Delta t\\
    0 & 1
    \end{matrix}\right)\left(\begin{matrix}
    \Td{x}_{0}\\
    \Td{v}_{0}
    \end{matrix}\right)+\left(\begin{matrix}
    \Delta t^{2}\Tt{qQ} & \Tt{0}\\
    \Tt{0} & \Delta t \Tt{q}
    \end{matrix}\right)\Tt{F}\Tt{U}^{k+1}.
\end{equation}
Inserting the expression for $\Tt{U}^{k+1}$ from~\eqref{eq:SDC_U0} into~\eqref{eq:up_mat} yields
\begin{equation}\label{eq:up_mat_SDC}
    \left(\begin{matrix}
    \Td{x}_{n+1}\\
    \Td{v}_{n+1}
    \end{matrix}\right)=\left(\begin{matrix}
    1 & \Delta t\\
    0 & 1
    \end{matrix}\right)\left(\begin{matrix}
    \Td{x}_{0}\\
    \Td{v}_{0}
    \end{matrix}\right)+\left(\begin{matrix}
    \Delta t^{2}\Tt{qQ} & \Tt{0}\\
    \Tt{0} & \Delta t \Tt{q}
    \end{matrix}\right)\Tt{F}\Tt{P}^{k+1}_{\textrm{sdc}}\Tt{U}_{0}
\end{equation}
or
\begin{equation}
    \left(\begin{matrix}
    \Td{x}_{n+1}\\
    \Td{v}_{n+1}
    \end{matrix}\right)=\left(\begin{matrix}
    1 & \Delta t\\
    0 & 1
    \end{matrix}\right)
    \left(\begin{matrix}
    \Td{x}_{0}\\
    \Td{v}_{0}
    \end{matrix}\right)+\left(\begin{matrix}
    \Delta t^{2}\Tt{qQ} & \Tt{0}\\
    \Tt{0} & \Delta t \Tt{q}
    \end{matrix}\right)\Tt{F}\Tt{P}^{k+1}_{\textrm{sdc}}\bar{\mathbf{1}}\left(\begin{matrix}
    \Td{x}_{0}\\
    \Td{v}_{0}
    \end{matrix}\right)
\end{equation}
where $\bar{\mathbf{1}}=\left(\begin{matrix}
	\mathbf{1} & \Tt{0}\\
	\Tt{0} & \mathbf{1}
\end{matrix}\right)\in\mathbb{R}^{2(M+1)\times 2}$ with $\mathbf{1}=(1,1, \dots ,1)^{T}\in\mathbb{R}^{M+1}.$
A full-step of SDC from $t_n$ to $t_{n+1}$ for the damped harmonic oscillator therefore becomes
\begin{equation}\label{eq:full-step_SDC}
    \left(\begin{matrix}
    \Td{x}_{n+1}\\
    \Td{v}_{n+1}
    \end{matrix}\right)=\left(\left(\begin{matrix}
    1 & \Delta t\\
    0 & 1
    \end{matrix}\right)
   +\left(\begin{matrix}
    \Delta t^{2}\Tt{qQ} & \Tt{0}\\
    \Tt{0} & \Delta t \Tt{q}
    \end{matrix}\right)\Tt{F}\Tt{P}^{k+1}_{\textrm{sdc}}\bar{\mathbf{1}}\right)\left(\begin{matrix}
    \Td{x}_{0}\\
    \Td{v}_{0}
    \end{matrix}\right).
\end{equation}
and the stability function of SDC iteration is
\begin{equation}\label{eq:stab_domain}
    R(\Delta t \kappa, \Delta t \mu)=\left(\begin{matrix}
    1 & \Delta t\\
    0 & 1
    \end{matrix}\right)+\left(\begin{matrix}
    \Delta t^{2}\Tt{qQ} & \Tt{0}\\
    \Tt{0} & \Delta t\Tt{q}
    \end{matrix}\right)\Tt{F} \Tt{P}^{k+1}_{\textrm{sdc}}\bar{\mathbf{1}}.
\end{equation}
Stability in the sense that $x_n$ and $v_n$ remain bounded as $n \to \infty$ is then ensured if $(\Delta t \kappa, \Delta t \mu)$ is an element of the stability domain
\begin{equation}
     \Omega_{\text{stab}} := \left\{ (\Delta t \kappa, \Delta t \mu) \in \mathbb{R}^2_{+} : \rho(R(\Delta t \kappa, \Delta t \mu)) < 1 \right\}.
\end{equation}

Figure~\ref{fig:stab1} shows the stability domain after $K=50$ iterations (upper left) and the convergence domain (upper right) for $M=3$ Gauss-Legendre nodes.
Note how the boundaries of the convergence and stability domain coincide.
In general, the stability domains grows as $M$ increases.
While this is not documented here, readers can generate stability domains for larger values of $M$ using the provided code.
\begin{figure}[t]
	\centering
	\includegraphics[scale=0.34]{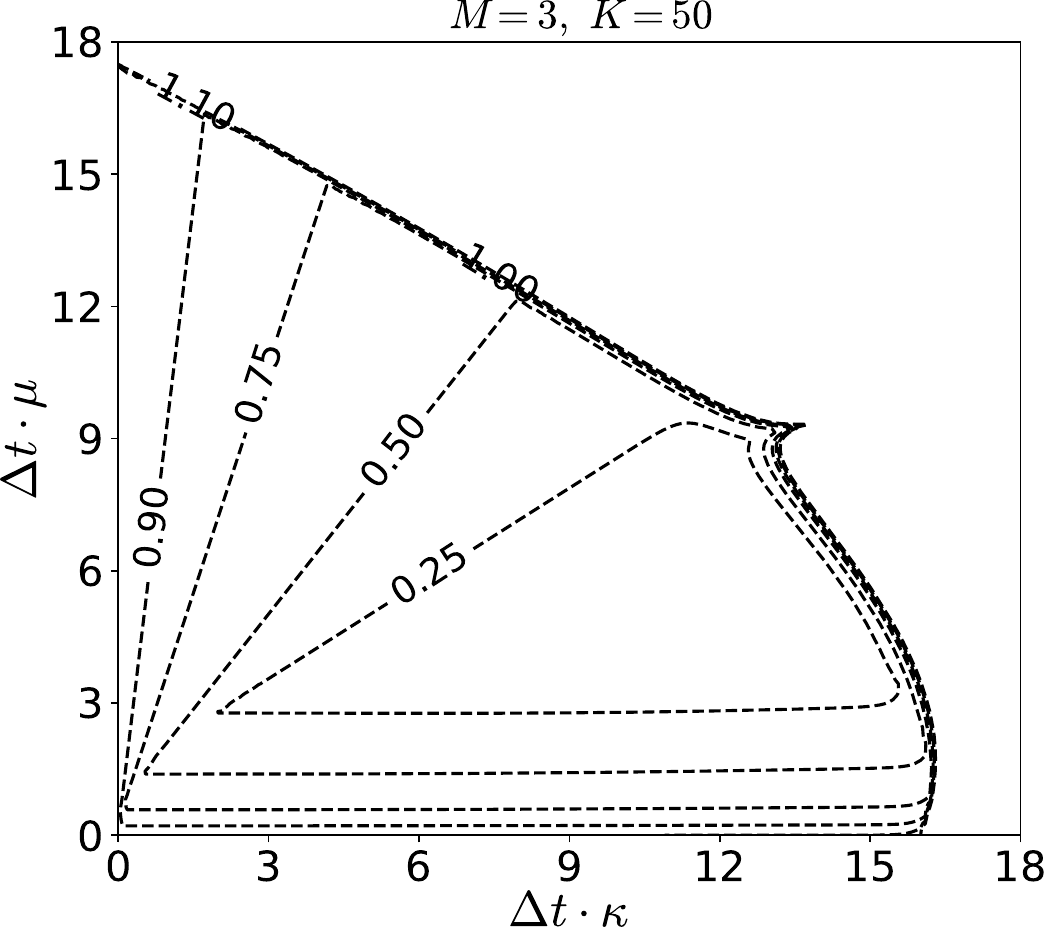}
	\includegraphics[scale=0.34]{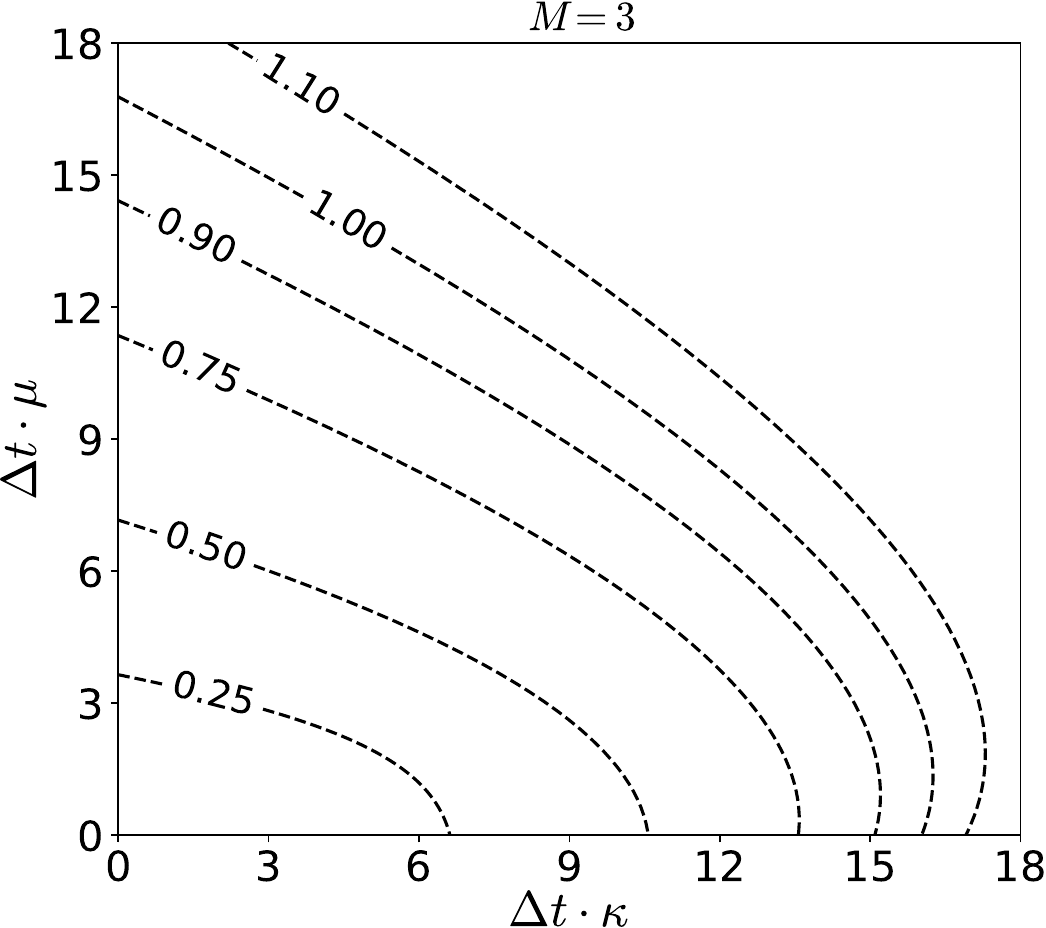}
	 \includegraphics[scale=0.34]{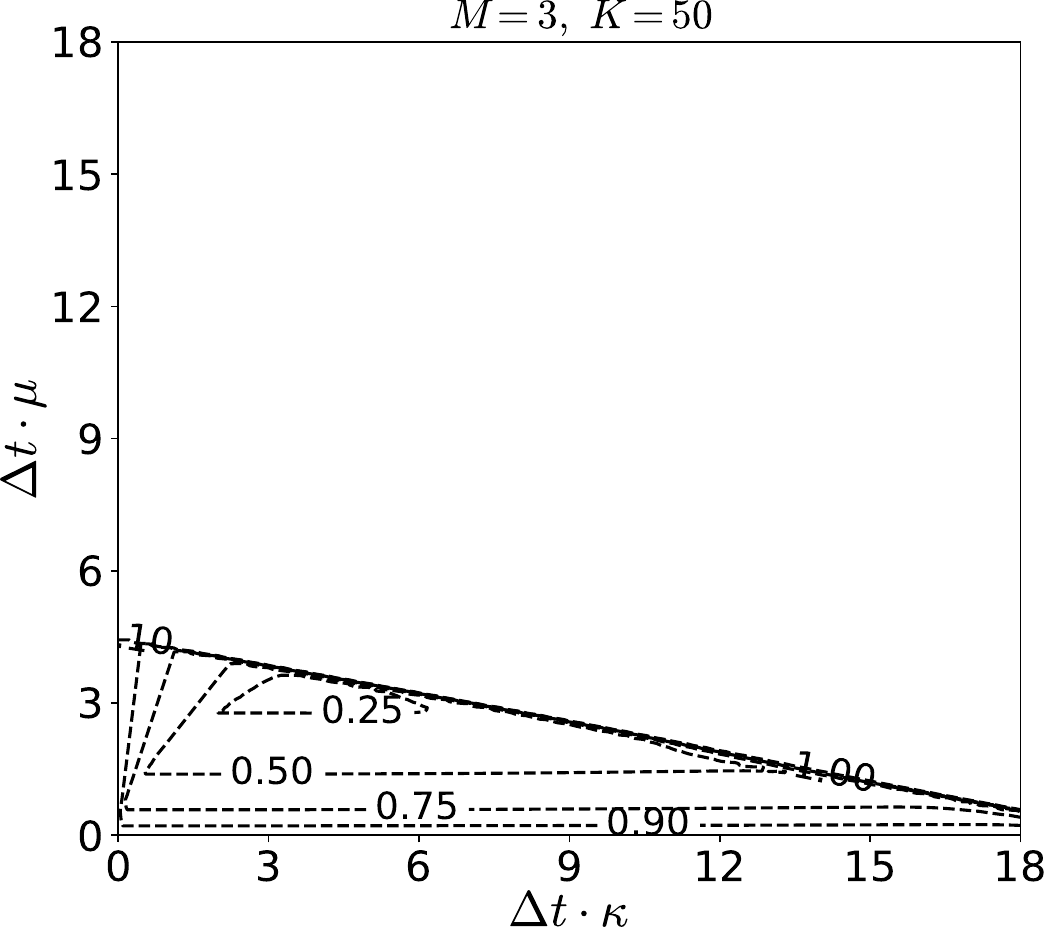}
	\includegraphics[scale=0.34]{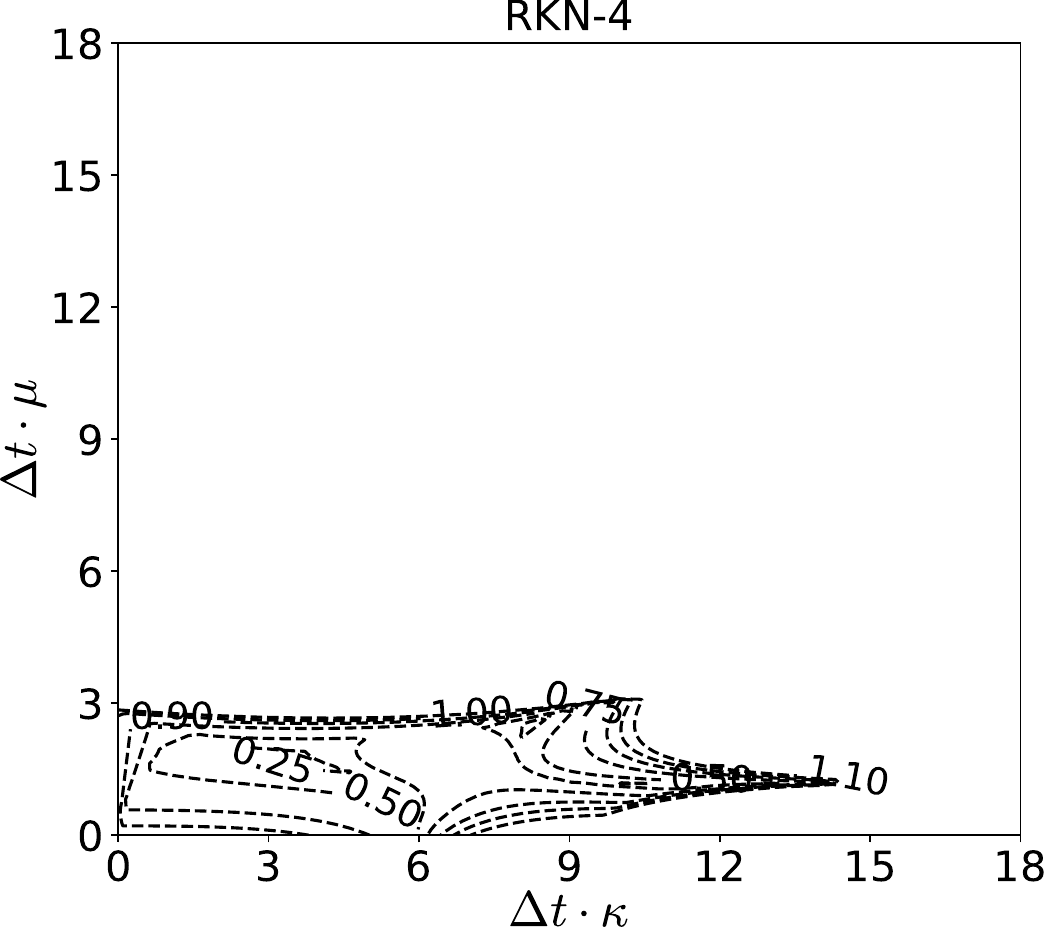}
	\caption{Stability domain for $K=50$ iterations (upper left) and convergence domain (upper right) of SDC with $M=3$ Gauss-Legendre quadrature nodes. Stability domain of Picard iteration with $K=50$ iterations and $M=3$ nodes (lower left) and stability domain of RKN-4 (lower right).}
	\label{fig:stab1}
\end{figure}

Figure~\ref{fig:stab1} also compares the stability/convergence domains of the Picard iteration~\eqref{eq:Picard} (lower left) and RKN-4 (lower right) with the SDC iteration~\eqref{eq:linear_SDC} (upper left) for $K=50$.
For the undamped system with $\mu = 0$, the Picard iteration converges up to around $\Delta t \kappa = 18$ while SDC only converges until $\Delta t \kappa = 16$, although neither method will provide accuracy for such a low resolution.
However, once damping is added to the system SDC converges for a much larger range of parameters.
In particular, SDC converges for the stiff case with very strong damping while Picard does not.
The stability domains changes when the number or type of quadrature nodes changes.
The reader can use the provided code to generate stability domains for other choices.

Figure~\ref{fig:stab3} illustrates how the stability domain of SDC changes with the number of iterations $K$.
For $K=1$, the stability domain is noticeably smaller than the convergence domain.
Surprisingly, for $K=2$, the whole shown range of parameters becomes stable -- at the moment, we have no theoretical explanation.
\rev{A preliminary parameter search suggests that Lobatto nodes in particular often produce methods that remain stable for extremely strong damping (up to $\Delta t \mu \approx 100$) but we were not able to identify a robust heuristic for this behaviour.} For $K=3$ iterations, parts of the shown parameter range are unstable again but the stability domain is still significantly larger than for $K=1$.
Increasing to $K=4$ iterations increase the stability domain into the direction of stronger damping but slightly decreases it in the direction of a larger spring constant.
However, in all cases the stability domain of SDC is much larger than that of Picard or RKN-4.
\rev{\paragraph{Stability for the purely oscillatory case}
Table~\ref{table:kappa} shows the maximum stable values for SDC and Picard iteration for $\Delta t \kappa$ along the $x$-axis, that is for the purely oscillatory case with no damping ($\mu = 0$).
Choosing an even number of iterations $K$ seems to be a poor choice for purely oscillatory systems as both methods are either unstable or have very restrictive stability limits.
By contrast, if $K$ is odd, both methods are stable for very large steps with SDC allowing even larger stable time steps than Picard iterations.
At the moment, we cannot offer a hypothesis what causes this very different behaviour for odd and even $K$.
}
\begin{table}
	\centering
	\caption{\rev{Stability limit for $\Delta t \kappa$ for $\mu =0$ (purely oscillatory case with no damping) rounded to the first digit for SDC and Picard iteration (in brackets).}}
	\label{table:kappa}
\begin{tabular}{|| c| c |c | c| c| c||} 
	\hline
	\multicolumn{6}{|c|}{SDC~(Picard)} \\
	\hline
	$K$ &  $M=2$ & $M=3$ & $M=4$ & $M=5$ & $M=6$\\
	\hline
	$1$ & $6.0~(4.7) $ & $7.2~(4.7)$ & $7.8~(4.7)$& $8.4~(4.7)$& $8.6~(4.7)$\\
	$2$ & $0.0~(12.0)$ & $0.0~(0.0)$ & $0.0~(0.0)$ & $0.0~(0.0)$ & $0.0~(0.0)$\\
	$3$ & $0.0~(0.0)$  & $9.6~(7.1)$ & $26.5~(4.0)$ & $35.3~(4.0)$ & $55.1~(4.0)$\\
	$4$ & $11.6~(7.0)$            & $0.2~(0.1)$ & $0.4~(0.2)$ & $0.4~(0.2)$ & $0.6~(0.2)$\\
	\hline
\end{tabular}
	\medskip

\end{table}

\begin{figure}[t]
    \centering
	\includegraphics[scale=0.34]{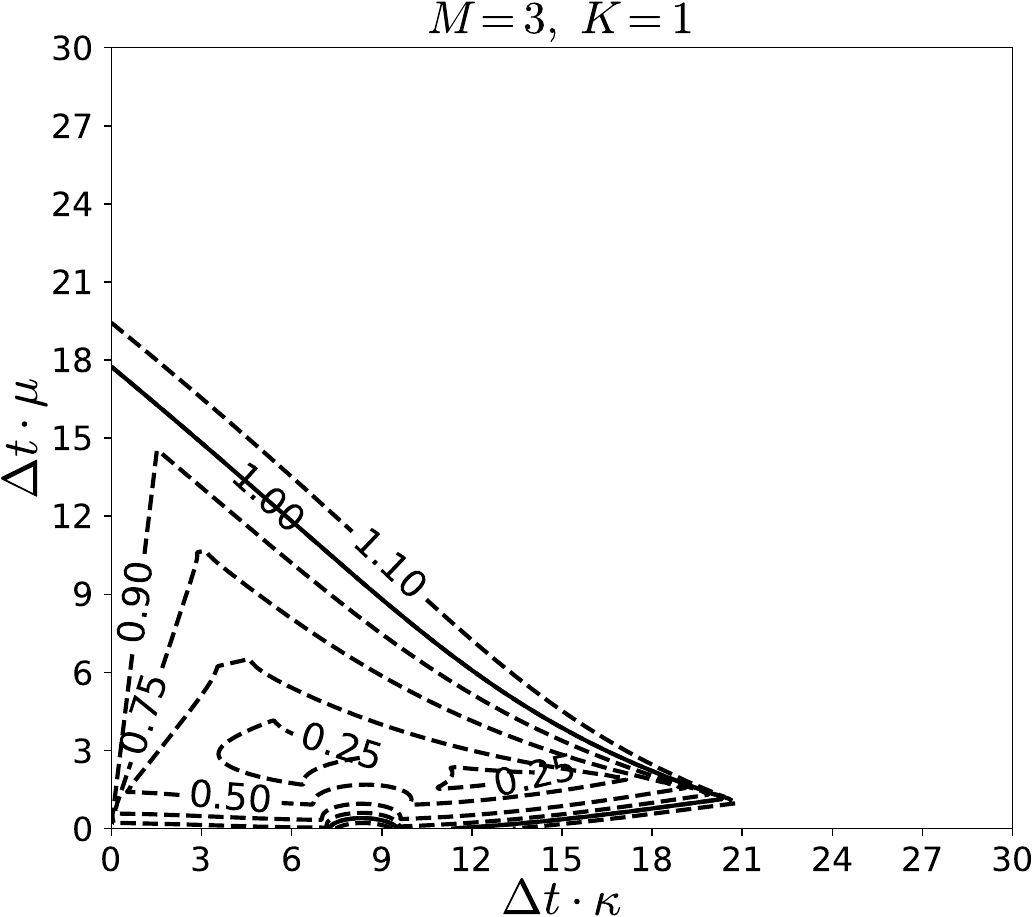}
	\includegraphics[scale=0.34]{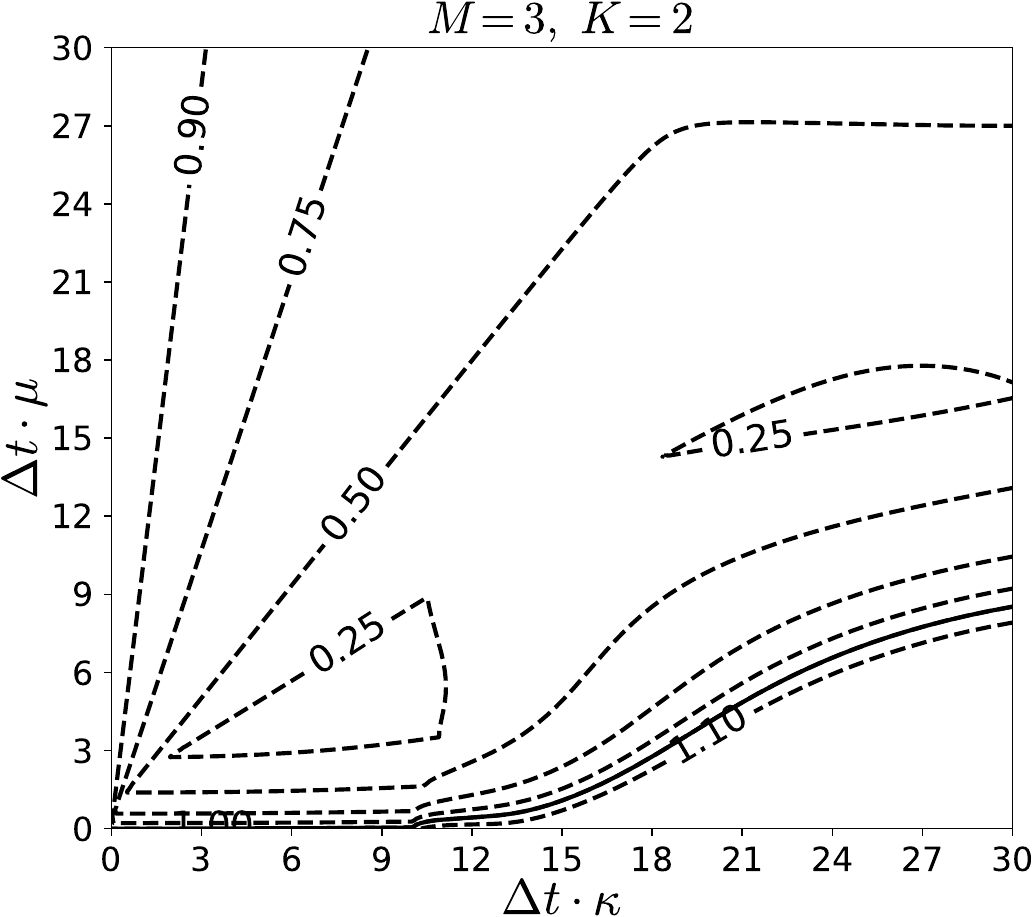}
	\includegraphics[scale=0.34]{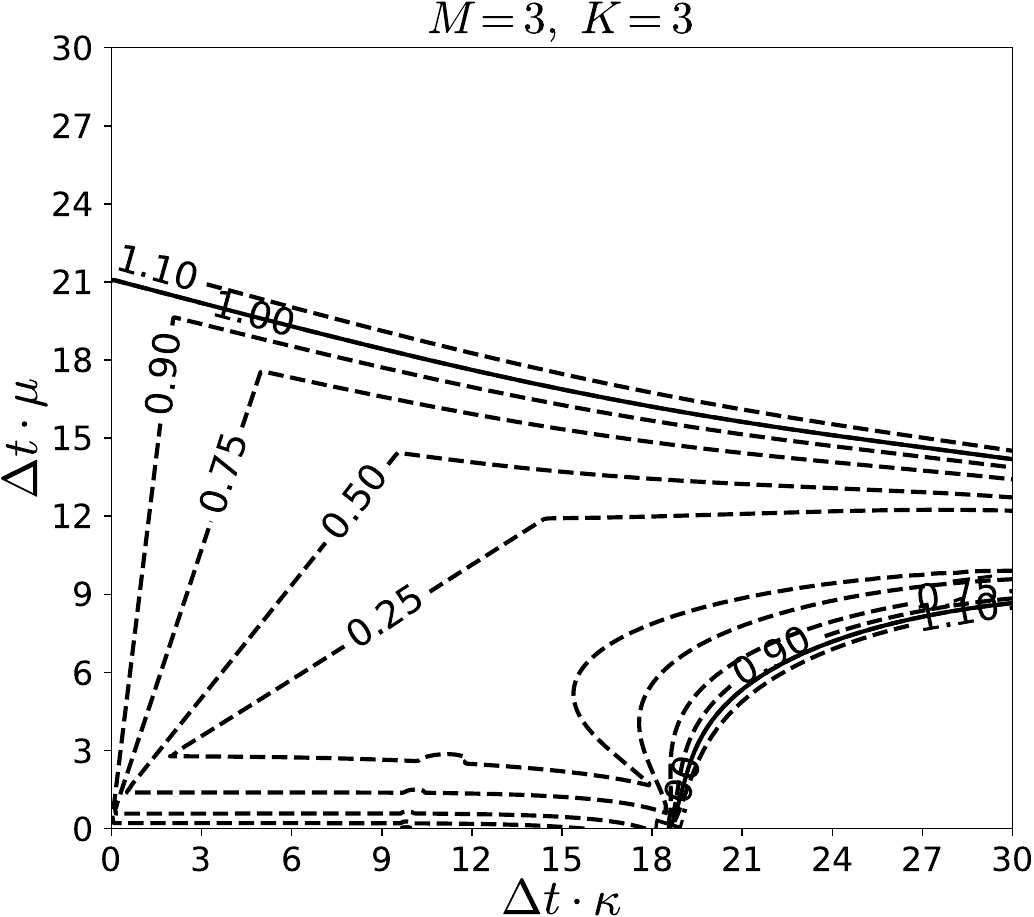}
	\includegraphics[scale=0.34]{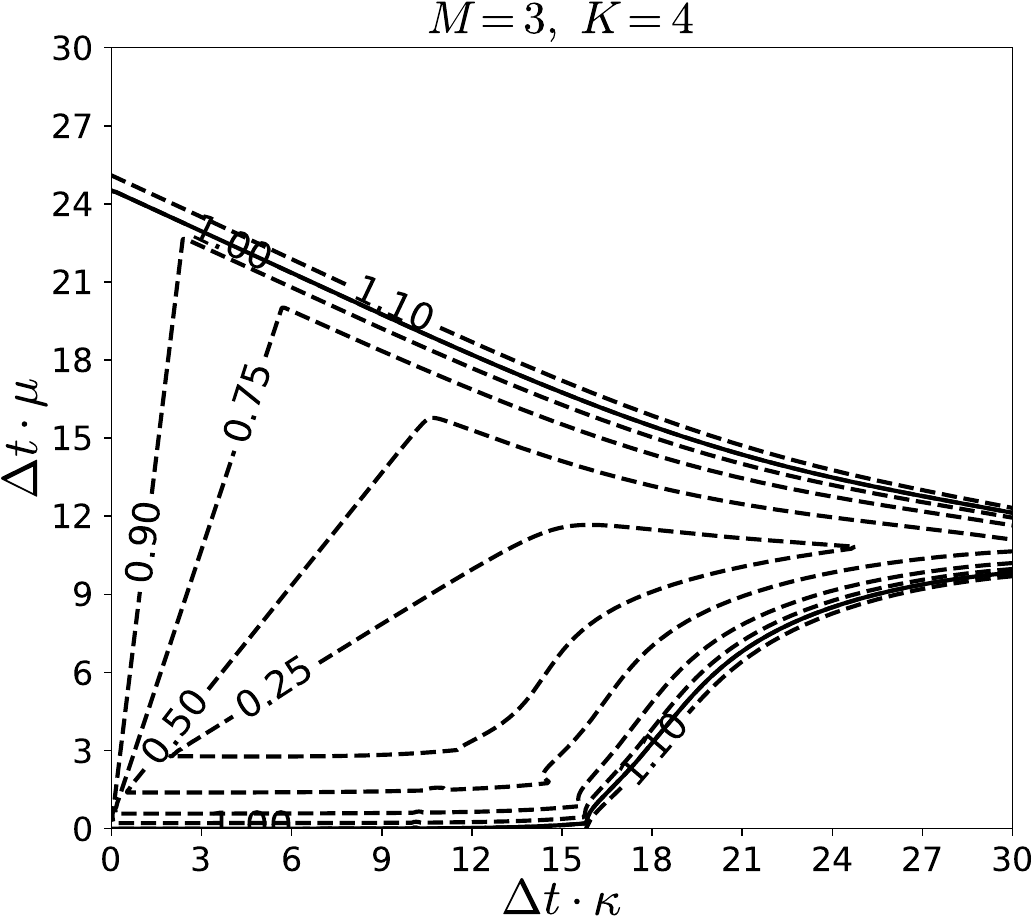}
	\caption{Stability domains of SDC with $M=3$ Gauss-Legendre nodes and $K=1,2,3,4$ iterations.}
	\label{fig:stab3}
\end{figure}
\section{Consistency and convergence order}\label{sec:convergence}
We state and prove our main theoretical result on the convergence rate of SDC for second-order problems in Section~\ref{subsection:theory} and then validate the theory against numerical examples for the special case of a single charged particle in a Penning trap in Section~\ref{subsection:numerical_examples}.

\subsection{Theory}\label{subsection:theory}
The strategy we use to prove the Theorem below follows approaches used for the first-order case~\cite{CausleySeal2019,KremlingSpeck2021}. 
\begin{theorem}\label{the:main}
Consider the initial value problem~\eqref{eq:ode_system} with  a Lipschitz continuous function $f$ with Lipschitz constant $L$.
Let $p$ denote the order of the quadrature rule, and 
assume that $\Td{f}\circ (\Td{x},\Td{v})\in \mathbf{C}^{p}([t_{n}, t_{n+1}])$ and that there exists a positive constant $G$ such that $ \|\frac{d^{p}}{d t^{p}}(\Td{f} \circ (\Td{x},\Td{v}))\|\leq G$. 
\rev{Let $(x(t_{n+1}),\ v(t_{n+1}))$ be the exact solutions to~\eqref{eq:ode_system}} and $(\Td{x}_{n+1}^{k},\ \Td{v}_{n+1}^{k})$ be the approximate solutions to~\eqref{eq:coll_SDC} provided by SDC after $k$ iterations. 
If the step size $\Delta t$ is sufficiently small, then 
\begin{subequations}
\begin{align}
	&|x(t_{n+1})-x^{k}_{n+1}|=\mathcal{O}(G\Delta t^{p+1})+ \mathcal{O}(L^{k+1}\Delta t^{k+k_{0}+2})\\
	&|v(t_{n+1})-v^{k}_{n+1}|=\mathcal{O}(G\Delta t^{p+1})+ \mathcal{O}(L^{k+1}\Delta t^{k+k_{0}+1}),
\end{align}	
\end{subequations}
where $k_{0}$ denotes the approximation order of the base method \rev{used to generate $\Tt{U}^0$, see Remark~\ref{remark:starting_values}}.   
Moreover, if $\Td{f}$ is independent of $\Td{v}$, we have  
\begin{subequations}
\begin{align}
 	&|x(t_{n+1})-x^{k}_{n+1}|=\mathcal{O}(G\Delta t^{p+1})+ \mathcal{O}(L^{k+1}\Delta t^{2k+k_{0}+2})\\
 	&|v(t_{n+1})-v^{k}_{n+1}|=\mathcal{O}(G\Delta t^{p+1})+ \mathcal{O}(L^{k+1}\Delta t^{2k+k_{0}+1}).
\end{align}
\end{subequations}
\end{theorem}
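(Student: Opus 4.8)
The plan is to bound the total error with the triangle inequality, splitting it into a consistency (collocation) error that is independent of $k$ and an iteration error. Let $\Tt{U}^{\star}=(\Tt{X}^{\star},\Tt{V}^{\star})$ denote the exact solution of the collocation problem \eqref{eq:coll_matrix_final} and let $(x_{n+1}^{\star},v_{n+1}^{\star})$ be the end-of-step values obtained from it through \eqref{eq:up_vec}. Then
\begin{equation*}
|x(t_{n+1})-x_{n+1}^{k}|\leq |x(t_{n+1})-x_{n+1}^{\star}|+|x_{n+1}^{\star}-x_{n+1}^{k}|,
\end{equation*}
and analogously for $v$. The first term is the quadrature error of an order-$p$ rule applied to the integrals in \eqref{eq:Picard_int_eq}: inserting the exact solution into the collocation equations leaves a residual of size $\mathcal{O}(G\Delta t^{p+1})$, where the constant is controlled by the assumption $\|\frac{d^{p}}{dt^{p}}(\Td{f}\circ(\Td{x},\Td{v}))\|\leq G$, and a short Lipschitz stability argument valid for small $\Delta t$ transfers this residual to the solution error. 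This produces the $\mathcal{O}(G\Delta t^{p+1})$ term common to all four bounds, so the remaining task is to estimate the iteration error.

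For the iteration error I would track the nodal errors $\Tt{E}_{X}^{k}:=\Tt{X}^{\star}-\Tt{X}^{k}$ and $\Tt{E}_{V}^{k}:=\Tt{V}^{\star}-\Tt{V}^{k}$. Because $\Tt{U}^{\star}$ is the fixed point of \eqref{eq:SDC_vec}, subtracting \eqref{eq:SDC_vec} at $\Tt{U}^{\star}$ from the same equation at $\Tt{U}^{k+1}$ gives
\begin{align*}
\Tt{E}_{X}^{k+1}&=\Delta t^{2}\Tt{Q}_{\textrm{x}}[F(\Tt{U}^{\star})-F(\Tt{U}^{k+1})]+\Delta t^{2}(\Tt{QQ}-\Tt{Q}_{\textrm{x}})[F(\Tt{U}^{\star})-F(\Tt{U}^{k})],\\
\Tt{E}_{V}^{k+1}&=\Delta t\,\Tt{Q}_{\textrm{T}}[F(\Tt{U}^{\star})-F(\Tt{U}^{k+1})]+\Delta t\,(\Tt{Q}-\Tt{Q}_{\textrm{T}})[F(\Tt{U}^{\star})-F(\Tt{U}^{k})].
\end{align*}
Since the matrices $\Tt{Q}_{\textrm{x}},\Tt{Q}_{\textrm{T}},\Tt{QQ}$ and $\Tt{Q}$ all have $\mathcal{O}(1)$ norm and $\|F(\Tt{U}^{\star})-F(\Tt{U}^{j})\|\leq L(\|\Tt{E}_{X}^{j}\|+\|\Tt{E}_{V}^{j}\|)$ by Lipschitz continuity, taking norms yields a componentwise linear inequality for $(\|\Tt{E}_{X}^{k+1}\|,\|\Tt{E}_{V}^{k+1}\|)$ in terms of itself and its level-$k$ value. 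For $\Delta t$ small the implicit level-$(k+1)$ part is absorbed by inverting $\Tt{I}-\Delta t A$ for a nonnegative, bounded matrix $A$, leaving the reduced recursion
\begin{align*}
\|\Tt{E}_{X}^{k+1}\|&\leq C\Delta t^{2}L\,(\|\Tt{E}_{X}^{k}\|+\|\Tt{E}_{V}^{k}\|),\\
\|\Tt{E}_{V}^{k+1}\|&\leq C\Delta t\,L\,(\|\Tt{E}_{X}^{k}\|+\|\Tt{E}_{V}^{k}\|).
\end{align*}

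The two cases of the theorem now follow by induction on $k$, started from $\|\Tt{E}_{X}^{0}\|,\|\Tt{E}_{V}^{0}\|=\mathcal{O}(\Delta t^{k_{0}})$, i.e.\ the error of the initial guess $\Tt{U}^{0}$ produced by the order-$k_{0}$ base method (Remark~\ref{remark:starting_values}). In the general case the velocity equation carries only a single factor $\Delta t$, so the combined error contracts by one power of $\Delta t$ per sweep and the velocity error dominates, $\|\Tt{E}_{V}^{k}\|=\mathcal{O}(L^{k}\Delta t^{k+k_{0}})$, while the extra $\Delta t$ in the position equation gives $\|\Tt{E}_{X}^{k}\|=\mathcal{O}(L^{k}\Delta t^{k+k_{0}+1})$. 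If $\Td{f}$ is independent of $\Td{v}$, the force no longer sees $\Tt{E}_{V}$, the position recursion decouples as $\|\Tt{E}_{X}^{k+1}\|\leq C\Delta t^{2}L\|\Tt{E}_{X}^{k}\|$, and the position error now gains two powers of $\Delta t$ per sweep, $\|\Tt{E}_{X}^{k}\|=\mathcal{O}(L^{k}\Delta t^{2k+k_{0}})$. Feeding these nodal bounds into \eqref{eq:up_vec} — which contributes $\Delta t^{2}\Tt{qQ}$ for the position and $\Delta t\,\Tt{q}$ for the velocity together with one further Lipschitz bound on $F$ (hence one more power of $L$) — gives $\mathcal{O}(L^{k+1}\Delta t^{k+k_{0}+2})$ and $\mathcal{O}(L^{k+1}\Delta t^{k+k_{0}+1})$ in the general case, and $\mathcal{O}(L^{k+1}\Delta t^{2k+k_{0}+2})$ and $\mathcal{O}(L^{k+1}\Delta t^{2k+k_{0}+1})$ when $\Td{f}=\Td{f}(\Td{x})$, where in the latter case the force error in \eqref{eq:up_vec} is driven purely by $\|\Tt{E}_{X}^{k}\|$. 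Adding the consistency term completes every bound.

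The main obstacle is the coupled nature of the position and velocity errors: in contrast to the first-order theory one must propagate two error sequences that contract at different rates and feed into each other through $F$. The entire gap between the two cases — two orders gained per sweep when $\Td{f}=\Td{f}(\Td{x})$ versus one when $\Td{f}=\Td{f}(\Td{x},\Td{v})$ — rests on whether $\Tt{E}_{V}$ re-enters the force evaluation, so the argument must keep the two sequences and their separate $L$- and $\Delta t$-powers cleanly apart. Technically, the delicate points are (i) justifying the absorption of the implicit level-$(k+1)$ terms uniformly in $k$ for small $\Delta t$, which amounts to the invertibility of $\Tt{I}-\Delta t A$ with a nonnegative inverse, and (ii) establishing the consistency estimate $\mathcal{O}(G\Delta t^{p+1})$ from the smoothness hypothesis $\Td{f}\circ(\Td{x},\Td{v})\in\mathbf{C}^{p}$ and the bound $G$, which while standard still requires transferring the quadrature residual to the collocation solution error.
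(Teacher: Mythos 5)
Your proposal is correct and follows essentially the same route as the paper: the same triangle-inequality split into a collocation error of size $\mathcal{O}(G\Delta t^{p+1})$ and an iteration error, the same coupled nodal error recursion obtained by subtracting the SDC sweep from the collocation fixed point (the paper's Proposition~\ref{prop:nodes}), the same absorption of the implicit level-$(k+1)$ terms for small $\Delta t$ followed by induction from the order-$k_0$ initial guess (the paper's Theorem~\ref{Th:bound}), and the same final transfer through~\eqref{eq:up_vec} contributing the extra factors $\Delta t^2 L$ and $\Delta t\, L$. The exponents you obtain in both the general and the velocity-independent case match the paper's exactly.
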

\begin{proof}
\rev{We substitute~\eqref{eq:dis_coll} into~\eqref{eq:vec_v} to find the updates $(x_{n+1}, v_{n+1})$ for the collocation method. Additionally, we determine the SDC method update formula $(x_{n+1}^{k}, v_{n+1}^{k})$ from~\eqref{eq:SDC_vec} by plugging it into~\eqref{eq:dis_coll},} subtract and use Cauchy-Schwarz inequality~\cite[p. 171-177]{Gilbert2005} and Lipschitz continuity to get
\begin{align}\label{eq:ineq_vel}
    |v_{n+1}-v^{k}_{n+1}| &= \Delta t|\Tt{q}(F(\Tt{X},\Tt{V})-F(\Tt{X}^{k}, \Tt{V}^{k}))| \\
    &\leq \Delta t\|\Tt{q}\|\|F(\Tt{X},\Tt{V})-F(\Tt{X}^{k}, \Tt{V}^{k})\| \\
    & \leq \Delta t L\|\Tt{q}\|(\|\Tt{X}-\Tt{X}^{k}\|+\|\Tt{V}-\Tt{V}^{k}\|).  
\end{align}
Using that $\|q\|\leq 1$~\cite{RuprechtSpeck2016} and Theorem~\ref{Th:bound} we find that
\begin{align}
		|v_{n+1}-v^{k}_{n+1}| &\leq \Delta t L\|\Tt{q}\|(\|\Tt{X}-\Tt{X}^{k}\|+\|\Tt{V}-\Tt{V}^{k}\|)\\ &\leq
	\Delta t L^{k+1}(\tilde{C_{1}}\Delta t^{k+k_0+1}+\tilde{C_{2}}\Delta t^{k+k_0}) \\
	 &\leq L^{k+1}(\tilde{C_{1}}+\tilde{C_{2}}\Delta t)\Delta t^{k+k_0+1}
\end{align}
The entries of the $qQ$ satisfy~\cite[p. 208-210]{HairerEtAl1993_nonstiff}
\begin{equation}
\sum_{i=0}^{M}q_{i}q_{i,j}=q_{j}(1-\tau_{j}), \ j=0,\dots,M. 	
\end{equation}
Because $\tau_{j}\leq 1$ for all $j=1, \dots, M$ on the unit interval, it holds that 
\begin{equation}
\|\Tt{q}\Tt{Q}\|=\max_{j=1,\dots , M}|q_{j}(1-\tau_{j})|\leq 1.
\end{equation}
Plugging~\eqref{eq:dis_coll} and~\eqref{eq:SDC_vec} into~\eqref{eq:vec_x}, subtracting and using Cauchy-Schwarz~inequality and Lipschitz continuity gives
\begin{align}\label{eq:ineq_pos}
     |\Td{x}_{n+1}-\Td{x}^{k}_{n+1}| =\Delta t^{2}|\Tt{qQ}(F(\Tt{X},\Tt{V})-F(\Tt{X}^{k}, \Tt{V}^{k}))| \\
     \leq L\Delta t^{2} \|\Tt{qQ}\|(\|\Tt{X}-\Tt{X}^{k}\|+\|\Tt{V}-\Tt{V}^{k}\|). 
\end{align}
Using Theorem~\ref{Th:bound} yields
\begin{align}
		|\Td{x}_{n+1}-\Td{x}^{k}_{n+1}| &\leq \Delta t^{2} L\|\Tt{qQ}\|(\|\Tt{X}-\Tt{X}^{k}\|+\|\Tt{V}-\Tt{V}^{k}\|)\\ &\leq
		\Delta t^{2} L^{k+1}(\tilde{C_{1}}\Delta t^{k+k_0+1}+\tilde{C_{2}}\Delta t^{k+k_0}) \\
		 &\leq L^{k+1}(\tilde{C_{2}}+\tilde{C_{1}}\Delta t)\Delta t^{k+k_0+2}
\end{align}
Assuming that $\Delta t \leq 1$, we have
\begin{equation*}
\tilde{C_{2}}+\tilde{C_{1}}\Delta t\leq \tilde{C_{2}}+\tilde{C_{1}} =: C_{L}	
\end{equation*}
and therefore
\begin{subequations}
\label{eq:nodes_order}
\begin{align}
		&|\Td{x}_{n+1}-\Td{x}^{k}_{n+1}|\leq C_{L}L^{k+1}\Delta t^{k+k_{0}+2},\label{eq:nodes_order_pos}\\
		&|\Td{v}_{n+1}-\Td{v}^{k}_{n+1}|\leq C_{L}L^{k+1}\Delta t^{k+k_{0}+1}.\label{eq:nodes_order_vel}
	\end{align}	
\end{subequations}
Gauss quadrature nodes satisfy the orthogonality condition 
\rev{\begin{equation*}
	\int_{0}^{1}s^{j-1}\prod_{i=1}^{M}(s-\tau_{i}) d s=0, \quad j=1,2, \dots, \xi.
\end{equation*}
~\cite[Theorem~7.9]{HairerEtAl1993_nonstiff}.
Thus}, the following estimates hold
\begin{subequations}\label{eq:coll_order}
\begin{align}
	|\Td{x}(t_{n+1})-\Td{x}_{n+1}|\leq C_{G} G\Delta t^{p+1},\label{eq:coll_order_pos}\\
	|\Td{v}(t_{n+1})-\Td{v}_{n+1}|\leq  C_{G}G\Delta t^{p+1},\label{eq:coll_order_vel}
\end{align}
\end{subequations}
where $p=M+\xi$ and $C_{G}$ is a constant. 
We have $\xi = M$ and $p=2M$ for Legendre nodes, $\xi = M-1$ and $p=2M-1$ for Radau nodes and $\xi = M-2$ and $p=2M-2$ for Lobatto nodes.
Subtracting the analytical solution from the SDC solution at time $t_{n+1}$ and using the triangle inequality along with~\eqref{eq:nodes_order} and~\eqref{eq:coll_order} gives the bound
\begin{align}\label{eq:pos_order}
        |\Td{x}(t_{n+1})-\Td{x}^{k}_{n+1}| \leq |\Td{x}(t_{n+1})-\Td{x}_{n+1}|+|\Td{x}_{n+1}-\Td{x}^{k}_{n+1}| \\
        \leq C_{G} G\Delta t^{p+1}+C_{L}L^{k+1}\Delta t^{k+k_{0}+2}. \nonumber
\end{align}
for the position error and the bound
\begin{align}\label{eq:vel_order}
        |\Td{v}(t_{n+1})-\Td{v}^{k}_{n+1}| \leq |\Td{v}(t_{n+1})-\Td{v}_{n+1}|+|\Td{v}_{n+1}-\Td{v}^{k}_{n+1}| \\
        \leq C_{G} G\Delta t^{p+1}+C_{L}L^{k+1}\Delta t^{k+k_{0}+1}. \nonumber
\end{align}
for the velocity error.
In summary, the local error of second-order SDC satisfies
\begin{subequations}
\begin{align}
		&|\Td{x}(t_{n+1})-\Td{x}^{k}_{n+1}| = \mathcal{O}(G\Delta t^{p+1})+\mathcal{O}(L^{k+1}\Delta t^{k+k_{0}+2}),\\
		 &|\Td{v}(t_{n+1})-\Td{v}^{k}_{n+1}|= \mathcal{O}(G\Delta t^{p+1})+\mathcal{O}(L^{k+1}\Delta t^{k+k_{0}+1}).
\end{align}
\end{subequations}
When $\Td{f}$ is independent of $\Td{v}$, equations~\eqref{eq:ineq_vel} and~\eqref{eq:ineq_pos} become
\begin{equation*}
		|\Td{x}_{n+1}-\Td{x}^{k}_{n+1}|=\Delta t^{2}|\Tt{qQ}(F(\Tt{X})-F(\Tt{X}^{k}))|\leq L\Delta t^{2} \|\Tt{qQ}\|\|\Tt{X}-\Tt{X}^{k}\| \leq L\Delta t^{2} \|\Tt{X}-\Tt{X}^{k}\|
\end{equation*}
and
\begin{equation*}
	|\Td{v}_{n+1}-\Td{v}^{k}_{n+1}|=\Delta t|\Tt{q}(F(\Tt{X})-F(\Tt{X}^{k}))|\leq L\Delta t \|\Tt{q}\|\|\Tt{X}-\Tt{X}^{k}\|\leq L\Delta t\|\Tt{X}-\Tt{X}^{k}\|. 
\end{equation*}
Using triangle inequality,~\eqref{eq:coll_order} and Theorem~\ref{Th:bound} yields
\begin{align}
        \label{eq:estimation_up_pos}
		|\Td{x}(t_{n+1})-\Td{x}^{k}_{n+1}| \leq |\Td{x}(t_{n+1})-\Td{x}_{n+1}|+|\Td{x}_{n+1}-\Td{x}^{k}_{n+1}| \\
		\leq C_{G}G\Delta t^{p+1}+C_{L}L^{k+1}\Delta t^{2k+k_0+2}
\end{align}
and 
\begin{align}
    \label{eq:estimation_up_vel}
		|\Td{v}(t_{n+1})-\Td{v}^{k}_{n+1}| \leq |\Td{v}(t_{n+1})-\Td{v}_{n+1}|+|\Td{v}_{n+1}-\Td{v}^{k}_{n+1}| \\
		\leq  C_{G}G\Delta t^{p+1}+C_{L}L^{k+1}\Delta t^{2k+k_{0}+1}.
\end{align}
Thus we obtain
\begin{subequations}
\begin{align}
		&|\Td{x}(t_{n+1})-\Td{x}^{k}_{n+1}| = \mathcal{O}(G\Delta t^{p+1})+\mathcal{O}(L^{k+1}\Delta t^{2k+k_0+2}),\\
		&|\Td{v}(t_{n+1})-\Td{v}^{k}_{n+1}| = \mathcal{O}(G\Delta t^{p+1})+\mathcal{O}(L^{k+1}\Delta t^{2k+k_{0}+1}).
	\end{align}
\end{subequations}
\end{proof}
A direct consequence of Theorem~\ref{the:main} and~\cite[Definition~2.1]{sommeijer1993explicit} is the following:
\begin{theorem}
    \label{thm:global}
	Let the right-hand side function $f$ in~\eqref{eq:ode_system} satisfy the assumptions of Theorem~\ref{the:main}. Then, the global convergence rate of SDC is $p^{*}:=\min\{p, k+k_0\}$. 
	If $f$ does not depend on $v$, we have $p^{*} =\min\{p, 2k+k_0\}$.
\end{theorem}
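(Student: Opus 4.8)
The plan is to lift the local (one-step) error estimates of Theorem~\ref{the:main} to a global error bound on $[t_0, t_{\mathrm{end}}]$ by the classical accumulation argument for one-step methods, using the notion of global convergence order made precise in~\cite[Definition~2.1]{sommeijer1993explicit}. First I would read off the powers of $\Delta t$ in the local bounds of Theorem~\ref{the:main}: in the general case the velocity carries a local error of order $\min\{p+1,\,k+k_0+1\}$ and the position of order $\min\{p+1,\,k+k_0+2\}$, while if $f$ is independent of $v$ these improve to $\min\{p+1,\,2k+k_0+1\}$ and $\min\{p+1,\,2k+k_0+2\}$, respectively.

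Next I would invoke the standard convergence principle for one-step methods: a method started from exact data with local error $\mathcal{O}(\Delta t^{q+1})$ and Lipschitz increment function has global error $\mathcal{O}(\Delta t^{q})$ after $N = (t_{\mathrm{end}}-t_0)/\Delta t = \mathcal{O}(\Delta t^{-1})$ steps. The loss of one power of $\Delta t$ is precisely the summation of $N$ local contributions, and the accumulated error stays bounded because a discrete Gronwall inequality, driven by the Lipschitz constant $L$ of $f$, yields a per-step amplification factor of $1+\mathcal{O}(\Delta t)$. Applying this component-wise gives a global velocity order $\min\{p,\,k+k_0\}$ and a global position order $\min\{p,\,k+k_0+1\}$ in the general case, and $\min\{p,\,2k+k_0\}$ and $\min\{p,\,2k+k_0+1\}$ when $f$ does not depend on $v$. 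Since the overall rate of the coupled method is the smaller of the two component orders, and in both settings that minimum is attained by the velocity, we obtain $p^* = \min\{p,\,k+k_0\}$ and $p^* = \min\{p,\,2k+k_0\}$, respectively.

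The main obstacle lies in the accumulation step rather than in the order counting. Because $x$ and $v$ form a coupled system, the error produced at step $n$ enters step $n+1$ as a perturbed initial value, whereas Theorem~\ref{the:main} presupposes exact starting data at the beginning of each step. The Gronwall estimate must therefore be run on the combined state $(x_n, v_n)$, and one has to verify that the position error feeds into the velocity equation, and conversely, only through terms controlled by $\Delta t L$, so that the per-step amplification remains $1+\mathcal{O}(\Delta t)$ and the constants $C_G, C_L$ from Theorem~\ref{the:main} stay uniform in $n$. Establishing this uniform, coupling-robust control is the technical heart of the argument; once it is secured, the stated global orders follow immediately from the local exponents above.
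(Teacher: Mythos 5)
Your proposal is correct and follows essentially the same route as the paper, which states the result as a direct consequence of Theorem~\ref{the:main} combined with the standard local-to-global order reduction for one-step methods (citing~\cite[Definition~2.1]{sommeijer1993explicit}) and gives no further detail. Your order bookkeeping matches, and your identification of the velocity component as the one attaining the minimum is exactly the observation the paper relies on; you simply spell out the Gronwall accumulation step that the paper leaves implicit.
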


\subsection{Numerical examples}\label{subsection:numerical_examples}
We validate our convergence analysis for the Penning trap benchmark~\cite{Penning1936}. 
The equations of motion are the Lorentz equations
\begin{subequations}
\label{eq:motion}
\begin{align}
    &\dot{x}(t)=\Td{v}(t),\\
    &\dot{v}(t)=\Td{f}(\Td{x}(t),\Td{v}(t)):=\alpha[\Tt{E}(\Td{x}(t))+\Td{v}(t) \times \Tt{B}(\Td{x}(t))]
\end{align}
\end{subequations}
with a constant magnetic field $\Tt{B}=\frac{\omega_{B}}{\alpha}\cdot \hat{\Tt{e}}_{z}$ along the $z-$axis with frequency $\omega_{B}$.
Let $\alpha=\frac{q}{m}$ denote that particle's charge--to--mass ratio so that
\begin{equation}\label{eq:B}
    \Td{v}\times \Tt{B}=\frac{\omega_{B}}{\alpha}\left(
    \begin{matrix}
    0 & 1 & 0\\
    -1 & 0 & 0\\
    0 & 0 &0 
    \end{matrix}
    \right)\Td{v}.
\end{equation}
The electric field with frequency $\omega_{E}$ is given by 
\begin{equation}\label{eq:E}
    \Tt{E}(\Td{x})=-\epsilon\frac{\omega_{E}^{2}}{\alpha}\left(
    \begin{matrix}
    1 & 0 & 0\\
    0 & 1 & 0\\
    0 & 0 &-2 
    \end{matrix}
    \right)\Td{x}.
\end{equation}
We use the same parameter as Winkel et al.~\cite[Table 1]{WinkelEtAl2015}.
For~\eqref{eq:motion} with magnetic field~\eqref{eq:B} and electric field~\eqref{eq:E} and a single particle inside the Penning trap, an analytic solution can be found~\cite{BrownGab1986}. 
Note that because of the zero row in the matrix in~\eqref{eq:B}, the force along the third component is independent of the velocity, while the forces along the first or second component are not.
By looking at the error in the first and third component separately, we will confirm below the different convergence orders that our theory predicts for these cases. 

\subsubsection{Local error}
Figure~\ref{fig:local_order_ho} shows the local position (left) and velocity (right) of SDC along the first axis error against the time step $\Delta t$ scaled with the frequency of the magnetic field.
The local error is computed by taking the difference $\Delta x_{i}^{\mathrm{(abs)}}:=|x_{i}^{\mathrm{(approx)}}-x_{i}^{\mathrm{(analyt)}}|$ between numerical and analytic solution after a single time step.
The index $i=1,2, 3$ indicates the two horizontal and one vertical axes.
In line with our theoretical predictions, the order of the local error increases by one for every iteration and the order of the local error in the position is always one higher than the order of the local error in the velocity.
\begin{figure}[t]
	\centering
	\includegraphics[scale=0.34]{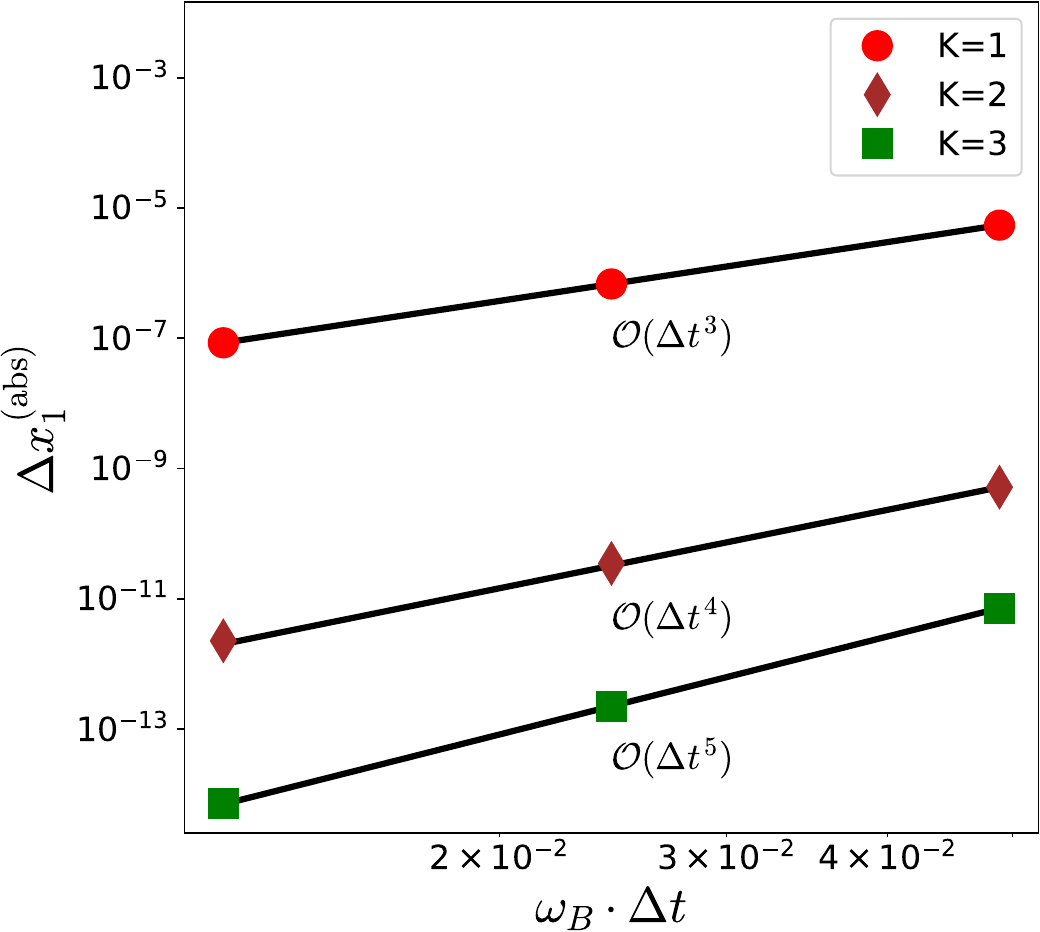}
	\includegraphics[scale=0.34]{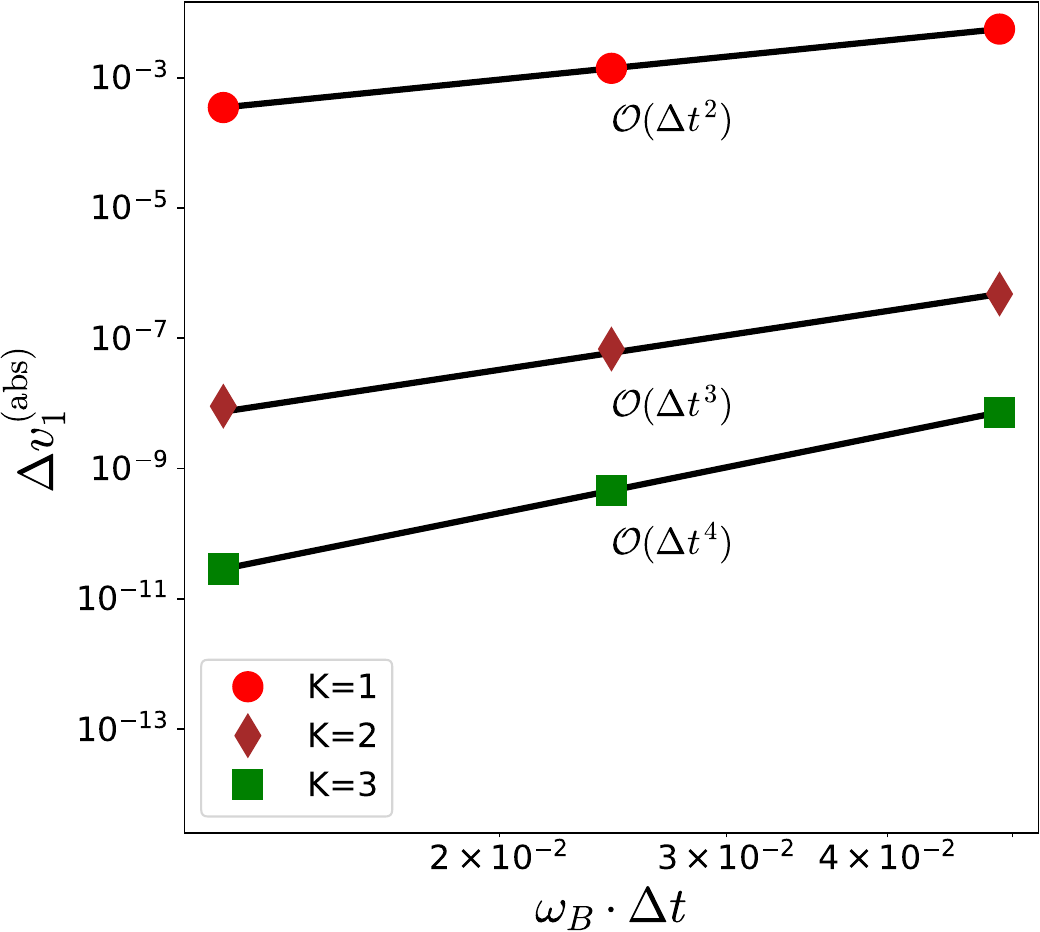}
	\caption{Absolute local error $\Delta x_{1}^{(\mathrm{abs})}$ in the first component of the particle's position (left) and velocity (right) using $K=1,2,3$ SDC iterations and M=5.}
	\label{fig:local_order_ho}
\end{figure}

Figure~\ref{fig:local_order_ver} shows the local error for position and velocity in the third component where the force is independent of $v$ for SDC using $M=5$ Gauss-Legendre quadrature nodes.
As predicted by Theorem~\ref{the:main}, the one order difference between position and velocity error remains, but the order of the local error increases by two orders per iteration. 
\begin{figure}[t]
	\centering
	\includegraphics[scale=0.34]{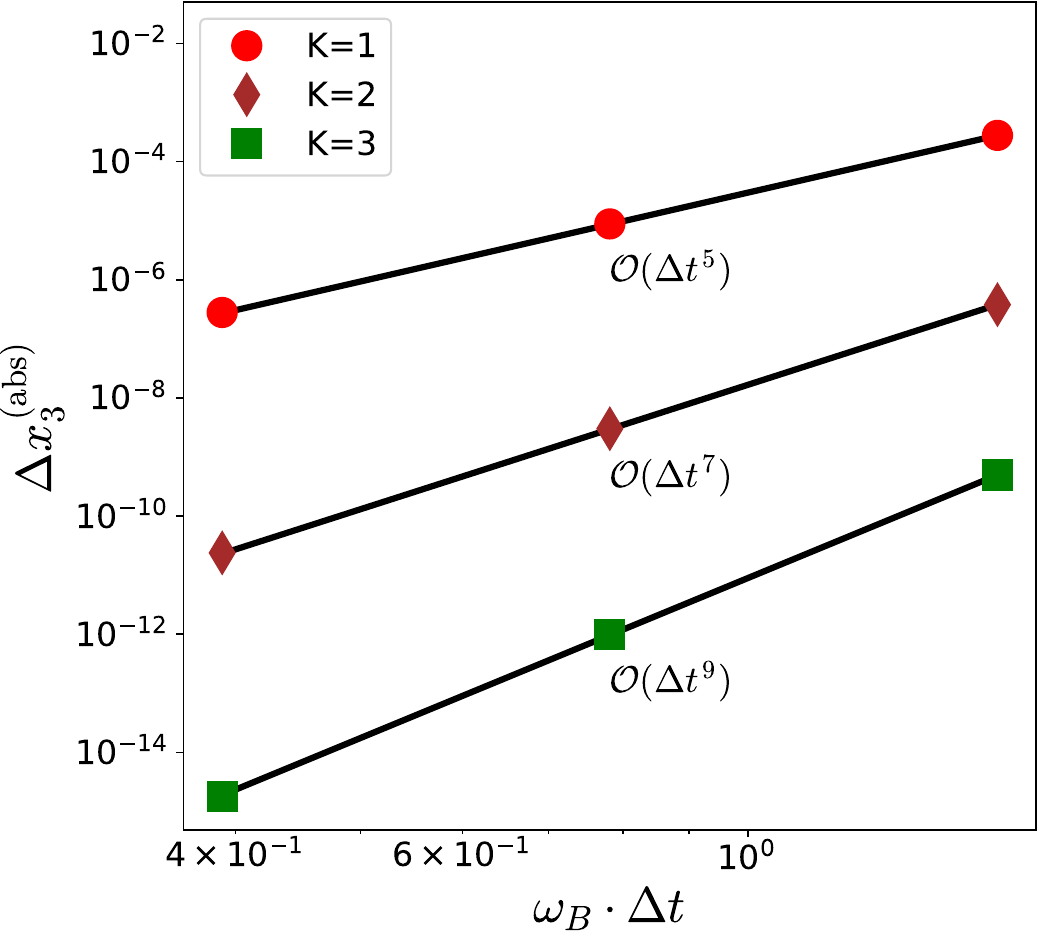}
	\includegraphics[scale=0.34]{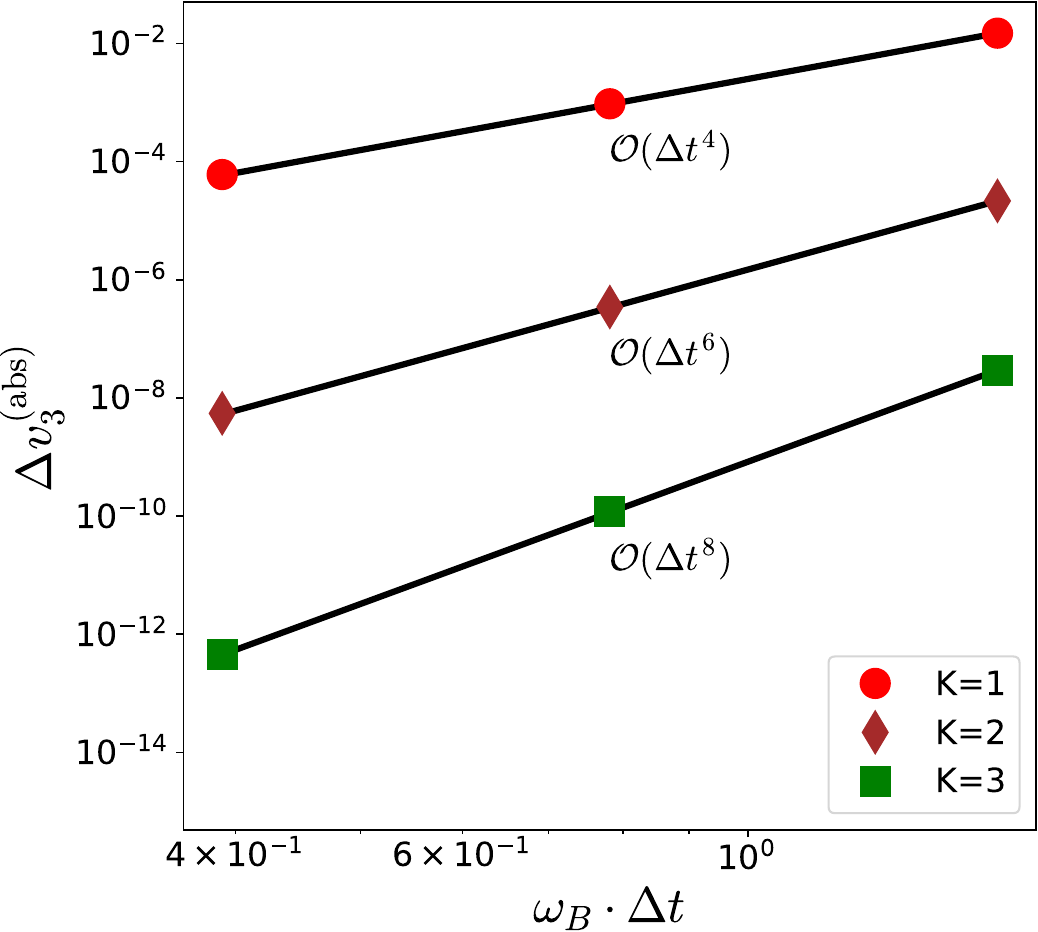}
	\caption{Absolute local error $\Delta x_{3}^{(\mathrm{abs})}$ in the third component of the particle's position (left) and velocity (right) using one, two and three SDC iterations and 5 quadrature nodes. In line with Theorem~\ref{the:main}, the order increases by two per iteration.}
	\label{fig:local_order_ver}
\end{figure}

\subsection{Global error}
Figure~\ref{fig:conv1} shows the relative global error in the $v_{1}$-component (left) and $v_{3}$-component (right) of the velocity for $M=3$ Gauss-Legendre nodes
with fixed final time $t_{\mathrm{end}}=2$.
Since the velocity depends on the position because of the inhomogeneous magnetic- and electric-field, the global error will have the order of the lower local order of the velocity. 
In line with Theorem~\ref{thm:global} we see that in the $v_{1}$-direction every iteration increases the global convergence order by one.
By contrast, in the $v_{3}$-direction, where the force is independent of the velocity, every iteration increases the global order by two.
\begin{figure}[t]
	\centering
	\includegraphics[scale=0.34]{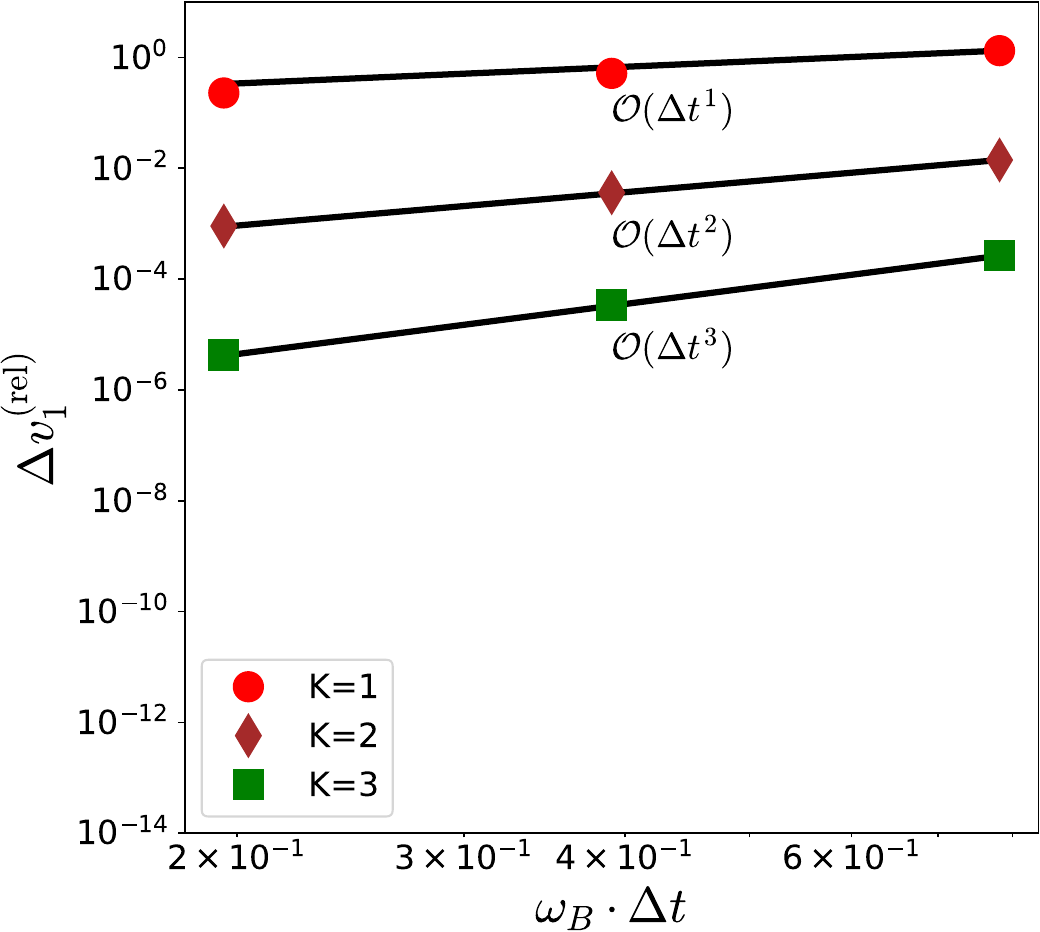}
	\includegraphics[scale=0.34]{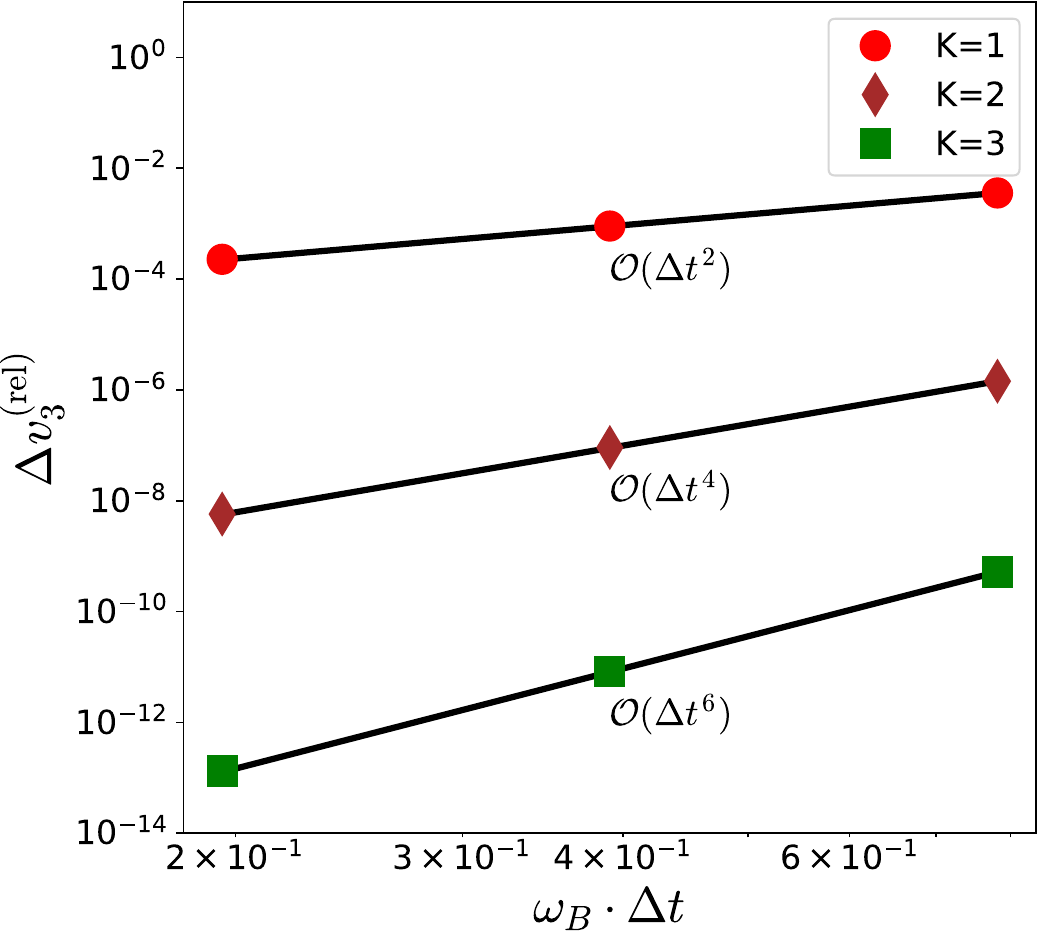}
	\caption{Relative global error $\Delta v_{i}^{(\mathrm{rel})}, \ i=1,3$ in the first component of the particle's horizontal (left) and vertical (right) velocity  in the Penning trap versus time step size for 3 Gauss-Legendre collocation nodes using one, two and three SDC iterations.}
	\label{fig:conv1}
\end{figure}

Table~\ref{table:1} shows measured convergence rates rounded to two digits for $M=2, 3, 4$ nodes and $K=1,2,3$ and $K=10$ iterations.
The theoretically predicted convergence rates according to Theorem~\ref{thm:global} are shown in brackets.
The left table shows the error in the $x_{1}$-component and the right table the error in the $x_{3}$-component.
In line with theory, the order increases by one per iteration in the former and by two per iteration in the latter case.
For $K=10$ iterations, the order is governed by the order of the underlying quadrature rule and therefore the same in both first and third components.
\begin{table}[h]
	\centering
	\caption{Measured convergence rate rounded to two digits followed by convergence rate predicted by Theorem~\ref{the:main} in brackets for different number of Gauss-Legendre quadrature nodes.}
	\label{table:1}
	\begin{tabular}{||c| c |c |c||} 
		\hline
		\multicolumn{4}{|c|}{Horizontal axis} \\
		\hline
		$K$ &  $M=2$ & $M=3$ & $M=4$ \\
		\hline
		$1$ & $1.28 (1)$ & $1.30 (1)$ & $1.61 (1)$ \\
		$2$ & $1.99 (2)$ & $1.99 (2)$ & $2.14 (2)$\\
		$3$ & $2.99 (3)$ & $2.99 (3)$ & $2.98 (3)$\\
		$10$ & $3.99 (4)$ & $5.99 (6)$ & $7.77 (8)$\\
		\hline
	\end{tabular}
	\hspace{5mm}
	\begin{tabular}{||c| c |c |c||} 
		\hline
		\multicolumn{4}{|c|}{Vertical axis} \\
		\hline
		$K$ &  $M=2$ & $M=3$ & $M=4$ \\
		\hline
		$1$ & $1.99 (2)$ & $2.00 (2)$ & $1.99 (2)$ \\
		$2$ & $4.00 (4)$ & $3.99 (4)$ & $3.98 (4)$\\
		$3$ & $3.99 (4)$ & $5.96 (6)$ & $5.97 (6)$\\
		$10$ & $3.99 (4)$ & $5.99 (6)$ & $7.91 (8)$\\
		\hline
	\end{tabular}
	\medskip
	
\end{table}

\subsection{Conservation properties}
\begin{figure}[t]
	\centering
	\includegraphics[scale=0.31]{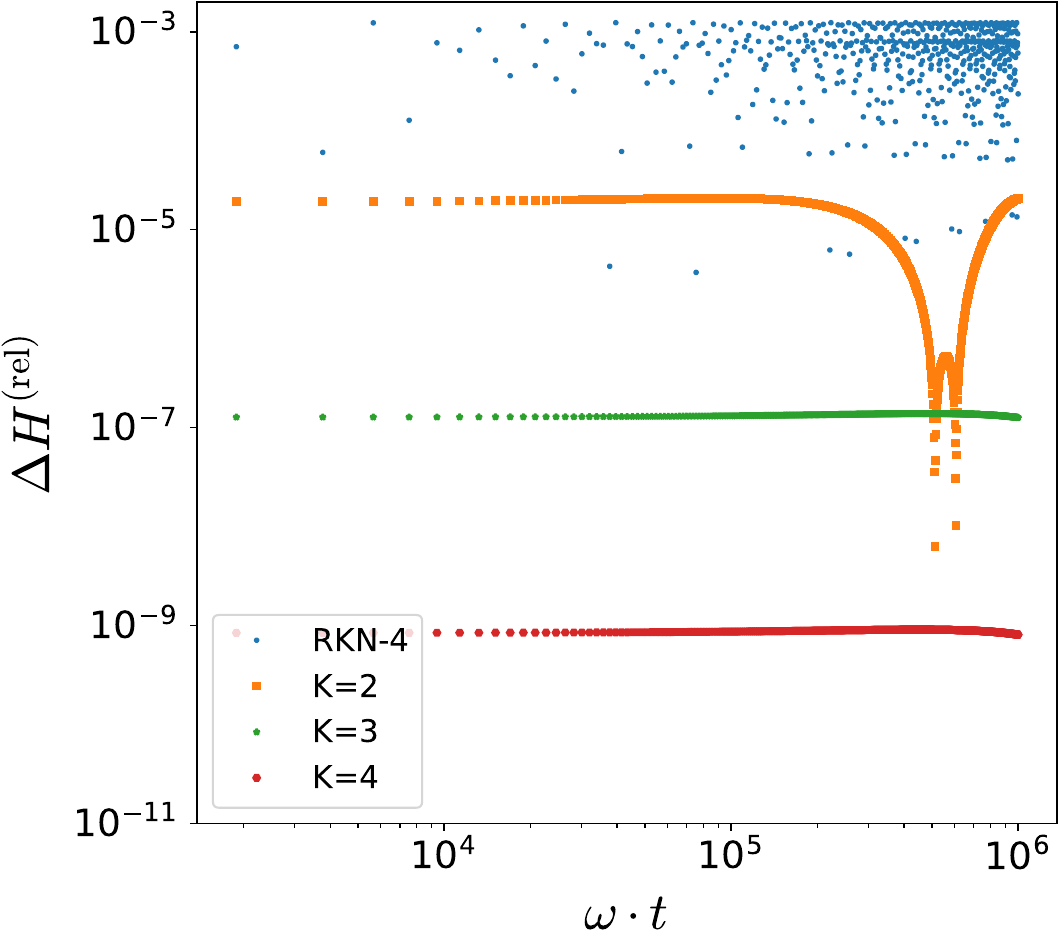}
	\includegraphics[scale=0.31]{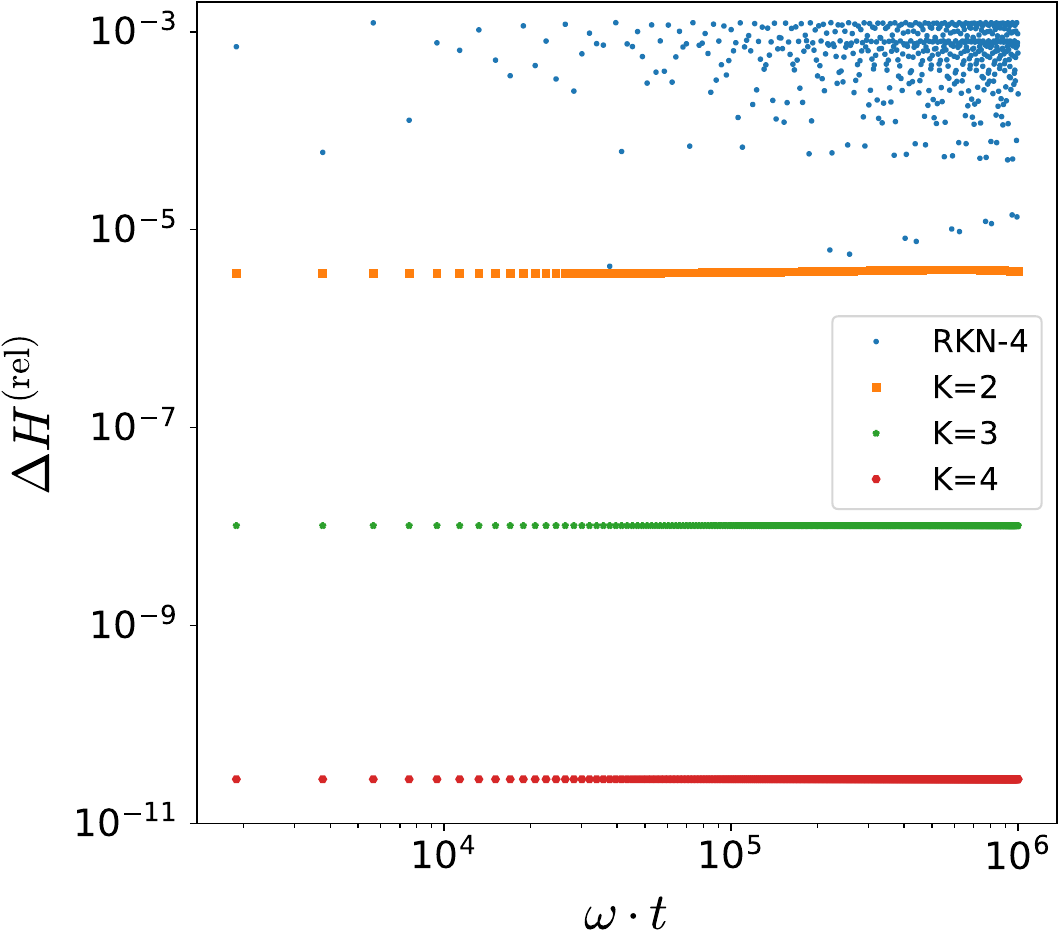}
	\caption{Relative error in the discrete Hamiltonian for the undamped harmonic oscillator over 1.5 million time steps for $M=3$ (left) and $M=5$ (right).}
	\label{fig:hamiltonian}
\end{figure}
Many second-order problems are Hamiltonian systems for which conservation properties of the time integrator are important.
We consider the undamped harmonic oscillator~\eqref{eq:harmonic_oscillator} with $\mu = 0.0$ and $\kappa=1.0$ and a resulting oscillation frequency of $\omega = 1.0$.
The continuous Hamiltonian $H(t) = \frac{1}{2} \left( x(t)^2 + v(t)^2 \right)$ is constant so that $H(t) = H(0)$.
Figure~\ref{fig:hamiltonian} shows the relative error $|H_n - H_0| / H_0$ in the discrete Hamiltonian $H_n = \frac{1}{2} \left( x_n^2 + v_n^2 \right)$ for a time step of $\Delta t = 2 \pi / 10$ until $t_{\text{end}} = 1 \times 10^6$ for a total of 1,591,551 steps for RKN and SDC with $M=3$ and $M=5$ Gauss-Legendre quadrature nodes and $K=2, 3, 4$ iterations.
Since the collocation method is symplectic, we expect bounded long term error for large $K$.
However, already for $K=2$ second-order SDC shows no discernable drift.
Furthermore, the relative error in the Hamiltonian from SDC is smaller than from RKN4 and decreases by about two orders of magnitude per iteration.
This is in line with previous findings for the Lorentz equations that showed low energy errors and little to no drift for SDC, even for very long simulation times~\cite{TretiakRuprecht2019,WinkelEtAl2015}.

\section{Computational efficiency}\label{sec:efficiency}
SDC requires more function evaluation per time step than the Picard iteration or a Runge-Kutta-Nyström (RKN) method.
However, for the same $\Delta t$, it will produce a smaller error.
This allows SDC to achieve accuracy comparable to Picard or RKN-4 with a larger time step.

For a fair comparison in terms of efficiency, Figure~\ref{fig:work_precision} shows the relative error in the first component (left) and third component (right) of the position for the Penning trap benchmark for SDC, Picard iteration and RKN-4 against the total number of $f$ evaluations required.
Note that the different iteration numbers for SDC in the left plot ($K=2,4,6$) and the right plot ($K=1,2,3$) are chosen to achieve the same global convergence rates in both cases.
In all cases, SDC is more efficient than Picard using the same number of iterations.
The advantage of SDC is more pronounced for the case where the force does depend on velocity.
Furthermore, with sufficiently many iterations, the increasing order of SDC allows it to eventually outperform RKN-4.
For the error in the third component, $K=3$ iterations are enough for SDC to become more efficient than RKN-4 while in the first component it requires $K=4$ iterations.
While not shown, the provided code can also generate work-precision results for the velocity-Verlet integrator.
For the error in the first component, we found it to be marginally less efficient than SDC with $k=2$ iterations while for the error in the third component was slightly more efficient than SDC with $k=1$ iteration.
In both cases, the higher-order variants of SDC are significantly more efficient than velocity Verlet (not shown).

\begin{figure}[t]
	\includegraphics[scale=0.31]{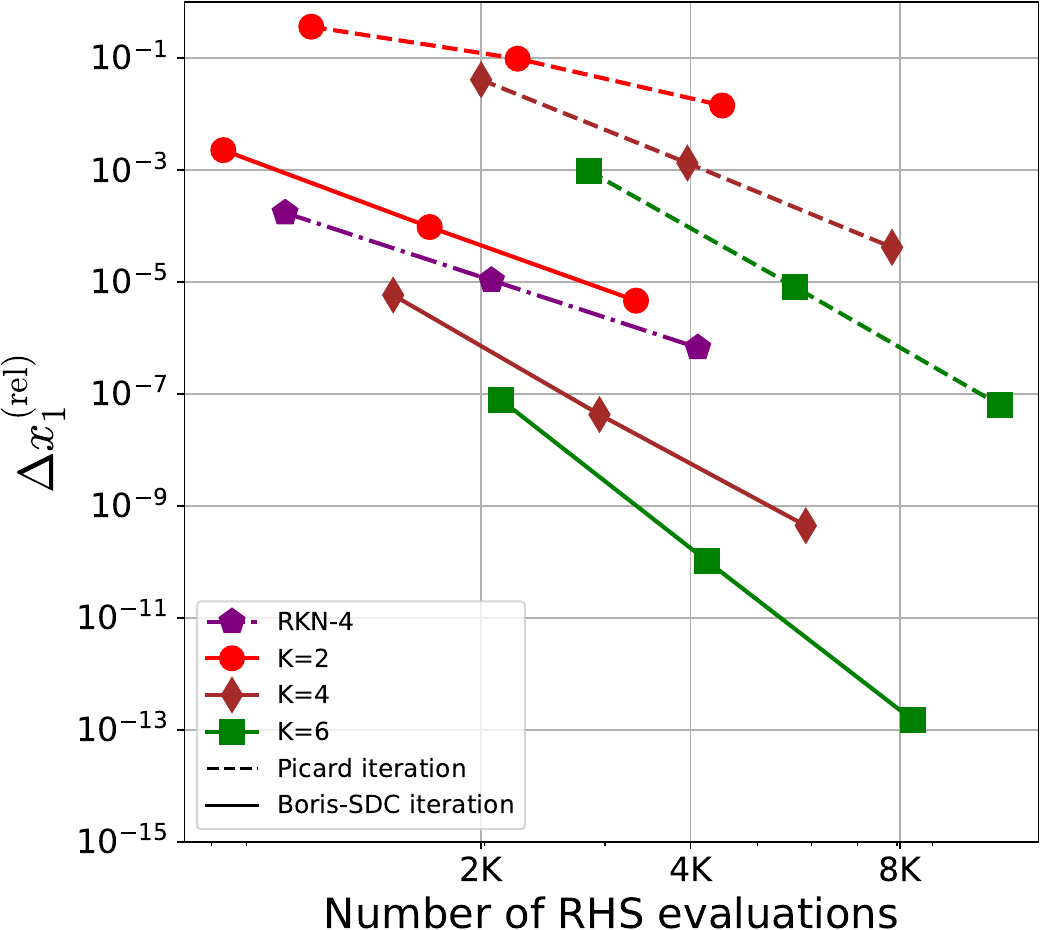}
		\includegraphics[scale=0.31]{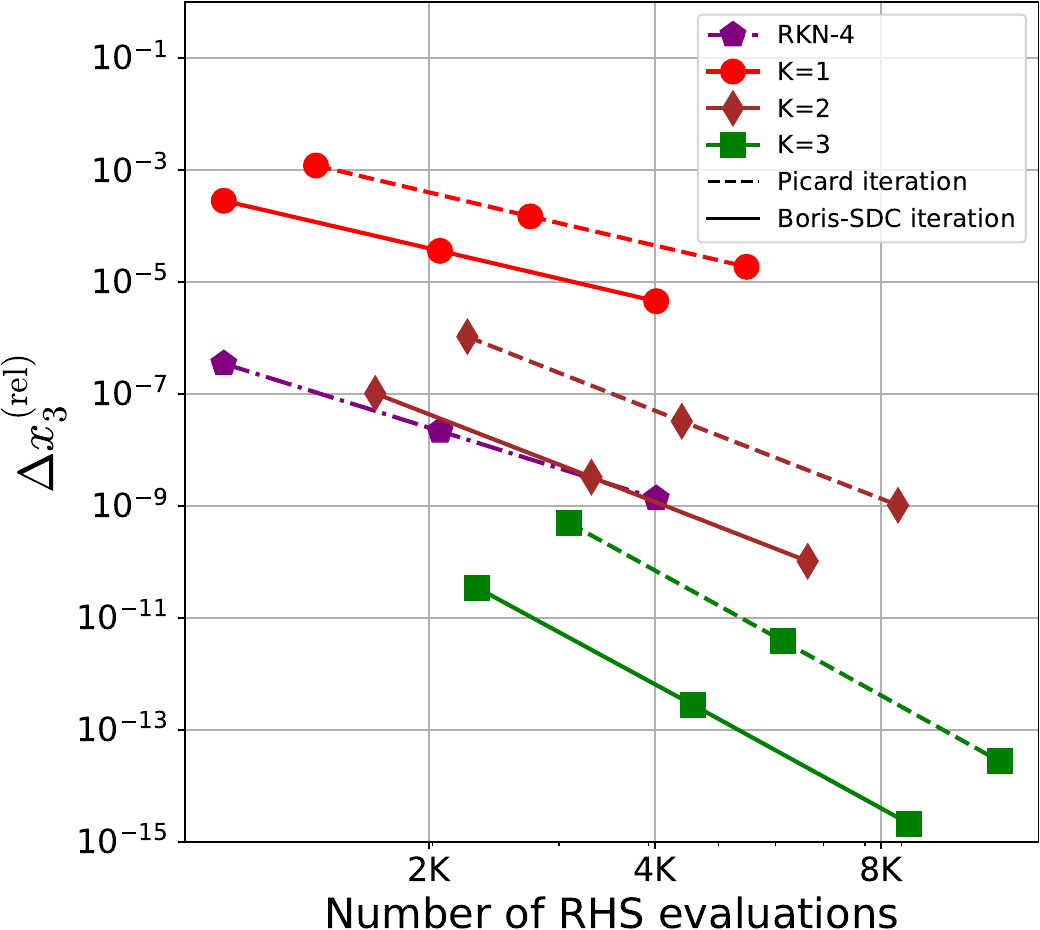}
	\caption{Relative position error in $x_{1}$-direction (left) and $x_{3}$-direction (right) for SDC (solid lines), Picard (dashed lines) with different iteration numbers $K$ with $M=5$ quadrature nodes and RKN-4 against the total number of $f$ evaluations.}
	\label{fig:work_precision}
\end{figure}

\section{Conclusions}
We provide a theoretical analysis of spectral deferred corrections applied to second-order problems.
Using the damped harmonic oscillator as a test problem, similar to how the Dahlquist equation is used for first-order problems, we investigate convergence and stability of SDC compared against a collocation method using Picard iteration and a Runge-Kutta-Nyström-4 method.
The main theoretical result of the paper is a proof that every SDC iteration increases the global convergence order by one for problems where the force depends on the velocity and by two if the force is independent of the velocity.
We also show that the order of the local position error is one higher than the order of the local velocity error.
Our theoretical predictions are validated against numerical examples for the Penning trap benchmark.
We compare SDC against Picard and RKN-4 with respect to work-precision and find it to be competitive for medium to high accuracies.

\section*{Acknowledgments}
We gratefully acknowledge many helpful discussions with Thibaut Lunet as well as his assistance with the pySDC software.
\appendix

\section{Auxiliary results}\label{section:aux_results}
This appendix collects a number of technical results we need for the proof of the main convergence results in Section~\ref{sec:convergence}. 
\begin{proposition}\label{prop:qt-qx}
For the matrices \rev{introduced in~\eqref{eq:dis_coll}},~\eqref{eq:Qt} and~\eqref{eq:Qx}, we have the following bounds
 \begin{subequations}
\begin{align}\label{eq:qt}
&\|Q_{\mathrm{T}}\|\leq 1, \ \quad
	\left\| Q_{\mathrm{x}} \right\| \leq \frac{3}{2}, \\\label{eq:qq}
	&\|Q\|\leq 1,\qquad \|QQ\| \leq 1.
\end{align}
\end{subequations}
\end{proposition}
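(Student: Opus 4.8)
Throughout I would use the maximum-absolute-row-sum norm $\|\cdot\|_\infty$, which is submultiplicative and satisfies $\|A\otimes\Tt{I}_d\|_\infty=\|A\|_\infty$; this reduces the bounds on $\Tt{Q}$, $\Tt{QQ}$, $\Tt{Q}_{\mathrm{T}}$ and $\Tt{Q}_{\mathrm{x}}$ to the corresponding bounds on the $(M+1)\times(M+1)$ matrices $Q$, $QQ$, $Q_{\mathrm{T}}$ and $Q_{\mathrm{x}}$. The key elementary observation is that $Q_{\mathrm{E}}$ and $Q_{\mathrm{I}}$ have only nonnegative entries and that the $m$-th row of each sums to $\frac{1}{\Delta t}\sum_{l=1}^{m}\Delta\tau_l=(\tau_m-\tau_0)/\Delta t\le 1$, since $\tau_0=t_n$ and $\tau_m\le t_{n+1}$. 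Hence $\|Q_{\mathrm{E}}\|_\infty\le 1$ and $\|Q_{\mathrm{I}}\|_\infty\le 1$.

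The bound on $Q_{\mathrm{T}}=\frac{1}{2}(Q_{\mathrm{E}}+Q_{\mathrm{I}})$ then follows immediately, since $Q_{\mathrm{T}}$ is again nonnegative with the same row sums, so $\|Q_{\mathrm{T}}\|_\infty\le 1$. For $Q_{\mathrm{x}}=Q_{\mathrm{E}}Q_{\mathrm{T}}+\frac{1}{2}(Q_{\mathrm{E}}\circ Q_{\mathrm{E}})$ I would split into the two summands: submultiplicativity gives $\|Q_{\mathrm{E}}Q_{\mathrm{T}}\|_\infty\le\|Q_{\mathrm{E}}\|_\infty\|Q_{\mathrm{T}}\|_\infty\le 1$, while for the Hadamard square the $m$-th row sum equals $\sum_{l=1}^{m}(\Delta\tau_l/\Delta t)^2$, and writing $a_l:=\Delta\tau_l/\Delta t\ge 0$ with $\sum_l a_l\le 1$, the elementary inequality $\sum_l a_l^2\le(\sum_l a_l)^2\le 1$ yields $\|Q_{\mathrm{E}}\circ Q_{\mathrm{E}}\|_\infty\le 1$. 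Adding the two contributions gives $\|Q_{\mathrm{x}}\|_\infty\le 1+\frac{1}{2}=\frac{3}{2}$, which is precisely the stated constant and shows where the factor $3/2$ originates.

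It remains to bound the collocation matrices. Here I would reduce $\|QQ\|_\infty\le\|Q\|_\infty^2$ to the single estimate $\|Q\|_\infty\le 1$ by submultiplicativity, since $QQ$ is the matrix product $Q\cdot Q$. This last bound is the main obstacle: in contrast to $Q_{\mathrm{E}}$, $Q_{\mathrm{I}}$ and $Q_{\mathrm{T}}$, the Gauss--Legendre collocation matrix $Q$ has sign-changing entries, so its absolute row sums strictly exceed its signed row sums $\tau_m$ and cannot be controlled by the partition-of-unity argument used above; the crude Lebesgue-function estimate $\sum_j|\int_0^{\tau_m}l_j(s)\,ds|\le\int_0^{\tau_m}\sum_j|l_j(s)|\,ds$ is likewise too weak. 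I would therefore invoke the established bound $\|Q\|_\infty\le 1$ for Gauss-type nodes from the SDC literature~\cite{RuprechtSpeck2016}, the same result already used for $\|q\|\le 1$ in the proof of Theorem~\ref{the:main}, rather than reproduce its proof here.
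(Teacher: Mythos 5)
Your proof is correct and follows essentially the same route as the paper: elementary row-sum bounds for $Q_{\mathrm{E}}$, $Q_{\mathrm{I}}$, $Q_{\mathrm{T}}$ and the Hadamard square, the triangle inequality plus submultiplicativity for $Q_{\mathrm{x}}$ and $QQ$, and an appeal to the literature for the non-elementary bound $\|Q\|\leq 1$ (the paper attributes this to Caklovic rather than to Ruprecht--Speck, which it cites only for $\|Q_{\mathrm{E}}\|,\|Q_{\mathrm{I}}\|\leq 1$ and $\|q\|\leq 1$, so you may wish to adjust that citation). Your normalized treatment of $\|Q_{\mathrm{E}}\circ Q_{\mathrm{E}}\|$ via $a_l=\Delta\tau_l/\Delta t$ is in fact slightly cleaner than the paper's displayed inequality, which omits the $1/\Delta t$ scaling.
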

\begin{proof}
From Ruprecht and Speck~\cite[Lemma 3.1]{RuprechtSpeck2016} we know that
\begin{equation}
    \|Q_{\textrm{I}}\|\leq 1, \ \|Q_{\textrm{E}}\|\leq 1.
\end{equation}
Furthermore, it holds that
\begin{equation*}
	\left\| Q_{\textrm{E}} \circ Q_{\textrm{E}} \right\| \leq \Delta \tau_1^2 + \ldots + \Delta \tau_M^2 \leq \Delta \tau_1 + \ldots + \Delta \tau_M \leq 1.
\end{equation*}
Then,
\begin{equation*}
 \|Q_{\textrm{T}}\|\leq \frac12(\|Q_{\textrm{E}}\|+\|Q_{\textrm{I}}\|)\leq 1
\end{equation*}
and
\begin{equation*}
 \left\| Q_{\textrm{x}} \right\| \leq \left\| Q_{\textrm{E}} \right\| \left\| Q_{\textrm{T}} \right\| + \frac{1}{2} \left\| Q_{\textrm{E}} \right\| \leq \frac{3}{2}.
\end{equation*}
The bounds for the norm of the $Q$ matrix were proven by Caklovic~\cite{Gaya2022}. 
Furthermore, we have $\|QQ\|\leq \|Q\|\|Q\|\leq 1$ which completes the proof.
\end{proof}
\begin{proposition}\label{prop:nodes}
Let $\Td{f}$ be a Lipschitz continuous function with Lipschitz constant $L$ and $(\Tt{X}, \Tt{V})$ be the solution to the collocation problem~\eqref{eq:dis_coll}.
Let $(\Tt{X}^{k}, \Tt{V}^{k})$ be approximations provided by the SDC iteration~\eqref{eq:SDC_vec}. 
Suppose $\Delta t$ satisfies  
\begin{equation}
\Delta t \leq (1 - \delta)/L \mbox{  and  } \Delta t^{2}< \frac13.  	
\end{equation}
for some positive number $0<\delta<1$.
Then, the following holds
\begin{subequations}\label{eq:ineq_nodes}
\begin{align}\label{eq:ineq_nodes_pos}
	&\|\Tt{X}-\Tt{X}^{k}\|\leq C_{1}L\Delta t^{2}(\|\Tt{X}-\Tt{X}^{k-1}\|+\|\Tt{V}-\Tt{V}^{k-1}\|+\|\Tt{V}-\Tt{V}^{k}\|),\\\label{eq:ineq_nodes_vel}
    &\|\Tt{V}-\Tt{V}^{k}\|\leq C_{2}L\Delta t(\|\Tt{V}-\Tt{V}^{k-1}\|+\|\Tt{X}-\Tt{X}^{k-1}\|+\|\Tt{X}-\Tt{X}^{k}\|)
\end{align}
\end{subequations}
with constants $C_1$, $C_2$ independent of $\Delta t$.
If $\Td{f}$ does not depend on $\Td{v}$ we have
\begin{subequations}\label{eq:indep_ineq_nodes}
    \begin{align}\label{eq:indep_ineq_nodes_pos}
	&\|\Tt{X}-\Tt{X}^{k}\|\leq C_{1}L\Delta t^{2}\|\Tt{X}-\Tt{X}^{k-1}\|,\\\label{eq:indep_ineq_nodes_vel}
	&\|\Tt{V}-\Tt{V}^{k}\|\leq 2 L\Delta t(\|\Tt{X}-\Tt{X}^{k-1}\|+\|\Tt{X}-\Tt{X}^{k}\|).
	\end{align}
\end{subequations}
\end{proposition}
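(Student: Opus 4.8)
The plan is to derive both inequalities by subtracting the collocation equations~\eqref{eq:dis_coll} from the SDC iteration~\eqref{eq:SDC_vec}, after re-indexing the latter so that the iterate on the left is $(\Tt{X}^{k},\Tt{V}^{k})$ and the one on the right is $(\Tt{X}^{k-1},\Tt{V}^{k-1})$. For the velocity, I subtract~\eqref{eq:SDC_vec_vel} from~\eqref{eq:dis_coll_vel} and split the collocation term through $\Tt{Q}_{\textrm{T}}$, writing $\Delta t\Tt{Q}F(\Tt{X},\Tt{V})=\Delta t\Tt{Q}_{\textrm{T}}F(\Tt{X},\Tt{V})+\Delta t(\Tt{Q}-\Tt{Q}_{\textrm{T}})F(\Tt{X},\Tt{V})$, which yields
\begin{equation*}
\Tt{V}-\Tt{V}^{k}=\Delta t\Tt{Q}_{\textrm{T}}\bigl(F(\Tt{X},\Tt{V})-F(\Tt{X}^{k},\Tt{V}^{k})\bigr)+\Delta t(\Tt{Q}-\Tt{Q}_{\textrm{T}})\bigl(F(\Tt{X},\Tt{V})-F(\Tt{X}^{k-1},\Tt{V}^{k-1})\bigr).
\end{equation*}
The same manipulation for the position, now splitting $\Delta t^{2}\Tt{QQ}$ through $\Tt{Q}_{\textrm{x}}$, gives the analogous identity with $\Delta t^{2}$, $\Tt{Q}_{\textrm{x}}$ and $\Tt{QQ}-\Tt{Q}_{\textrm{x}}$ replacing $\Delta t$, $\Tt{Q}_{\textrm{T}}$ and $\Tt{Q}-\Tt{Q}_{\textrm{T}}$.

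Next I would take norms, apply the triangle inequality and Lipschitz continuity of $\Td{f}$, so that $\|F(\Tt{X},\Tt{V})-F(\Tt{X}',\Tt{V}')\|\leq L(\|\Tt{X}-\Tt{X}'\|+\|\Tt{V}-\Tt{V}'\|)$, and bound the matrix norms using Proposition~\ref{prop:qt-qx}. Here I note that tensoring with $\Tt{I}_{d}$ leaves the operator norm unchanged, so $\|\Tt{Q}\|=\|Q\|$ and likewise for the others; combined with the proposition this gives $\|\Tt{Q}_{\textrm{T}}\|\leq1$, $\|\Tt{Q}-\Tt{Q}_{\textrm{T}}\|\leq2$, $\|\Tt{Q}_{\textrm{x}}\|\leq\tfrac32$ and $\|\Tt{QQ}-\Tt{Q}_{\textrm{x}}\|\leq\tfrac52$. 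The resulting inequalities each contain a same-level term on the right, namely $\|\Tt{V}-\Tt{V}^{k}\|$ for the velocity and $\|\Tt{X}-\Tt{X}^{k}\|$ for the position, which I move to the left-hand side; the cross term $\|\Tt{V}-\Tt{V}^{k}\|$ appearing in the position bound is kept on the right, matching the form of~\eqref{eq:ineq_nodes_pos}.

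The crux is showing that the coefficients multiplying these absorbed terms are strictly below one, so division leaves a positive constant independent of $\Delta t$. For the velocity this follows from $\Delta tL\|\Tt{Q}_{\textrm{T}}\|\leq\Delta tL\leq1-\delta$, giving $1-\Delta tL\|\Tt{Q}_{\textrm{T}}\|\geq\delta>0$ and hence a factor $C_{2}$ proportional to $1/\delta$. For the position I would combine $\Delta t^{2}<\tfrac13$, whence $\Delta t<1/\sqrt3$, with $\Delta tL\leq1-\delta$ to obtain $\Delta t^{2}\|\Tt{Q}_{\textrm{x}}\|L\leq\tfrac32\Delta t\,(\Delta tL)<\tfrac{\sqrt3}{2}<1$, so the position coefficient is bounded away from one uniformly in $\Delta t$ and the factor $C_{1}$ is proportional to $1/(1-\tfrac{\sqrt3}{2})$. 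Dividing by these positive quantities produces~\eqref{eq:ineq_nodes} with $C_{1},C_{2}$ depending only on $\delta$ and the fixed matrix-norm bounds. This coefficient-absorption step is the only genuine obstacle; the rest is bookkeeping.

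Finally, for the case where $\Td{f}$ is independent of $\Td{v}$, the force differences collapse to $\|F(\Tt{X})-F(\Tt{X}')\|\leq L\|\Tt{X}-\Tt{X}'\|$ with no velocity contribution. The position estimate then retains only $\|\Tt{X}-\Tt{X}^{k}\|$ as its absorbed term, so the same division yields~\eqref{eq:indep_ineq_nodes_pos} directly, while the velocity estimate loses its $\|\Tt{V}-\Tt{V}^{k}\|$ term entirely, making absorption unnecessary; the explicit bounds $\|\Tt{Q}_{\textrm{T}}\|\leq1$ and $\|\Tt{Q}-\Tt{Q}_{\textrm{T}}\|\leq2$ then give~\eqref{eq:indep_ineq_nodes_vel} with the stated constant $2$.
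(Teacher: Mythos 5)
Your proposal is correct and follows essentially the same route as the paper: subtract the collocation system from the re-indexed SDC sweep, apply Lipschitz continuity and the norm bounds of Proposition~\ref{prop:qt-qx}, and absorb the same-level term using $\Delta t L \le 1-\delta$ and $\Delta t^2 < \tfrac13$. Your grouping of the telescoping terms (pairing $\Tt{Q}_{\textrm{T}}$, $\Tt{Q}_{\textrm{x}}$ with the new iterate and $\Tt{Q}-\Tt{Q}_{\textrm{T}}$, $\Tt{QQ}-\Tt{Q}_{\textrm{x}}$ with the old one) is algebraically identical to the paper's splitting and yields the same coefficients, so the difference is purely cosmetic.
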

\begin{proof}
To prove~\eqref{eq:ineq_nodes_pos}, subtract~\eqref{eq:SDC_vec_pos} from~\eqref{eq:dis_coll_pos} to get
\begin{equation*}
\Tt{X}-\Tt{X}^{k}=\Delta t^2 \Tt{QQ}(F(\Tt{X},\Tt{V})-F(\Tt{X}^{k-1},\Tt{V}^{k-1}))+\Delta t^2 \Tt{Q}_{\textrm{x}}(F(\Tt{X}^{k-1}, \Tt{V}^{k-1})-F(\Tt{X}^{k},\Tt{V}^{k})).
\end{equation*}
Using triangle inequality and Lipschitz continuity we have
\begin{align*}
\|\Tt{X}-\Tt{X}^{k}\| &\leq \|\Tt{QQ}\| L \Delta t^{2}(\|\Tt{X}-\Tt{X}^{k-1}\|+\|\Tt{V}-\Tt{V}^{k-1}\|)+ \\
    &+\|\Tt{Q}_{\textrm{x}}\| L \Delta t^{2}(\|\Tt{X}-\Tt{X}^{k-1}\|+\|\Tt{V}-\Tt{V}^{k-1}\|) \\
    &+\|\Tt{Q}_{\textrm{x}}\| L \Delta t^{2}(\|\Tt{X}-\Tt{X}^{k}\|+\|\Tt{V}-\Tt{V}^{k}\|).
\end{align*}
Since $\|\Tt{Q}_{\textrm{x}}\|\leq \frac32$ and $\|\Tt{QQ}\|\leq 1$ by Proposition~\ref{prop:qt-qx}, we can simplify to
\begin{align*}\label{eq:X}
    \|\Tt{X}-\Tt{X}^{k}\| &\leq (1+\frac32) L \Delta t^{2}(\|\Tt{X}-\Tt{X}^{k-1}\|+\|\Tt{V}-\Tt{V}^{k-1}\|)\\
    &+ \quad \frac32L\Delta  t^{2}(\|\Tt{X}-\Tt{X}^{k}\|+\|\Tt{V}-\Tt{V}^{k}\|).
\end{align*}
Because of $\Delta t^{2}<\frac13$ we have $1-\frac32 L\Delta t^{2}\geq 1-L\Delta t\geq \delta >0$ and thus
\begin{equation*}
    \|\Tt{X}-\Tt{X}^{k}\|\leq \frac{5L\Delta t^{2}}{2-3L\Delta t^{2}}(\|\Tt{X}-\Tt{X}^{k-1}\|+\|\Tt{V}-\Tt{V}^{k-1}\|)+\frac{3L\Delta t^{2}}{2-3L\Delta t^{2}}\|\Tt{V}-\Tt{V}^{k}\|.
\end{equation*}
Since $1-\frac32 L\Delta t^{2}>\delta$ for $\Delta t^{2}<\frac13$, this yields
\begin{equation}
\frac{5}{2-3\Delta t^{2}L}\leq \frac{5}{2\delta}=:C_{1}.
\end{equation}
Hence,
\begin{equation}
    \|\Tt{X}-\Tt{X}^{k}\|\leq C_{1} L \Delta t^2 (\|\Tt{X}-\Tt{X}^{k-1}\|+\|\Tt{V}-\Tt{V}^{k-1}\|+\|\Tt{V}-\Tt{V}^{k}\|).
\end{equation}
We can prove~\eqref{eq:ineq_nodes_vel} in a similar way. 
Subtract~\eqref{eq:dis_coll_vel} from~\eqref{eq:SDC_vec_vel} to get
\begin{equation*}
\Tt{V}-\Tt{V}^{k}=\Delta t \Tt{Q}(F(\Tt{X},\Tt{V})-F(\Tt{X}^{k-1},\Tt{V}^{k-1}))+\Delta t \Tt{Q}_{\textrm{T}}(F(\Tt{X}^{k-1},\Tt{V}^{k-1})-F(\Tt{X}^{k},\Tt{V}^{k})).
\end{equation*}
Then,
\begin{align*}
    \|\Tt{V}-\Tt{V}^{k}\| &\leq \Delta t \|\Tt{Q}\|\|F(\Tt{X},\Tt{V})-F(\Tt{X}^{k-1},\Tt{V}^{k-1})\|\\
    &\quad +\Delta t \|\Tt{Q}_{\textrm{T}}\|\|F(\Tt{X}^{k-1},\Tt{V}^{k-1})-F(\Tt{X}^{k},\Tt{V}^{k})\|.
\end{align*}
By using that $F$ is Lipschitz continuous we obtain
\begin{multline*}
     \|\Tt{V}-\Tt{V}^{k}\|\leq \|\Tt{Q}\| L \Delta t (\|\Tt{X}-\Tt{X}^{k-1}\|+\|\Tt{V}-\Tt{V}^{k-1}\|)\\+\|\Tt{Q}_{\textrm{T}}\|L \Delta t (\|\Tt{X}-\Tt{X}^{k-1}\|+\|\Tt{V}-\Tt{V}^{k-1}\|+\|\Tt{X}-\Tt{X}^{k}\|+\|\Tt{V}-\Tt{V}^{k}\|).
\end{multline*}
Since $\|\Tt{Q}_{\textrm{T}}\|\leq 1$ and $\|\Tt{Q}\|\leq 1$ by Proposition~\ref{prop:qt-qx}, it follows that
\begin{multline*}\label{eq:v1}
     \|\Tt{V}-\Tt{V}^{k}\|\leq 2L\Delta t (\|\Tt{X}-\Tt{X}^{k-1}\|+\|\Tt{V}-\Tt{V}^{k-1}\|)+L\Delta t(\|\Tt{X}-\Tt{X}^{k}\|+\|\Tt{V}-\Tt{V}^{k}\|).
\end{multline*}
Since, $1-L\Delta t \geq\delta>0$, we get
\begin{multline*}
     \|\Tt{V}-\Tt{V}^{k}\|\leq \frac{2L \Delta t }{1-L \Delta t }(\|\Tt{X}-\Tt{X}^{k-1}\|+\|\Tt{V}-\Tt{V}^{k-1}\|)+\frac{L\Delta t }{1- L \Delta t}(\|\Tt{X}-\Tt{X}^{k}\|).
\end{multline*}
Because $1-L\Delta t \geq\delta>0$, we have
\begin{equation}
\frac{2}{1- L \Delta t }\leq \frac{2}{\delta}=:C_{2}
\end{equation}
and
\begin{equation}\label{eq:vel_ineq}
    \|\Tt{V}-\Tt{V}^{k}\|\leq C_{2} L \Delta t (\|\Tt{X}-\Tt{X}^{k-1}\|+\|\Tt{V}-\Tt{V}^{k-1}\|+\|\Tt{X}-\Tt{X}^{k}\|).
\end{equation}
Let $\Td{f}$ be independent of $\Td{v}$, i.~e., $F(\Tt{X},\Tt{V})=F(\Tt{X})$. Subtracting~\eqref{eq:dis_coll} from~\eqref{eq:SDC_vec} yields
\begin{subequations}
	\begin{align}
	\Tt{X}-\Tt{X}^{k}=\Delta t^2 \Tt{QQ}(F(\Tt{X})-F(\Tt{X}^{k-1}))+\Delta t^2 \Tt{Q}_{\textrm{x}}(F(\Tt{X}^{k-1})-F(\Tt{X}^{k})),\\
	\Tt{V}-\Tt{V}^{k}=\Delta t \Tt{Q}(F(\Tt{X})-F(\Tt{X}^{k-1}))+\Delta t \Tt{Q}_{\textrm{T}}(F(\Tt{X}^{k-1})-F(\Tt{X}^{k})).
	\end{align}
\end{subequations}
Using similar arguments as above, we obtain
\begin{subequations}
\begin{align}
    \|\Tt{X}-\Tt{X}^{k}\|\leq C_{1} L\Delta t^2 \|\Tt{X}-\Tt{X}^{k-1}\|\\
    \|\Tt{V}-\Tt{V}^{k}\|\leq 2 L\Delta t  (\|\Tt{X}-\Tt{X}^{k-1}\|+\|\Tt{X}-\Tt{X}^{k}\|)
\end{align}
\end{subequations}
\end{proof}
The following theorem provides the error bound for SDC at the quadrature nodes.
\begin{theorem}\label{Th:bound}
Consider the initial value problem~\eqref{eq:ode_system} and let $\Td{f}$ be Lipschitz continuous with Lipschitz constant $L$.
If the step size $\Delta t$ is sufficiently small, we have
\begin{subequations}
\begin{align}
    &\|\Tt{X}-\Tt{X}^{k}\|\leq \tilde{C_{1}}L^{k} \Delta t^{k+k_0+1},\\
    &\|\Tt{V}-\Tt{V}^{k}\|\leq \tilde{C_{2}}L^{k} \Delta t^{k+k_0}.
\end{align}	
\end{subequations}
with constants $\tilde{C_{1}},\ \tilde{C_{2}}$ independent of $\Delta t$, and $k_{0}$ the order of the procedure used to generate the starting value $\Tt{U}^0$ for the SDC iteration, see Remark~\ref{remark:starting_values}.
If $f$ is independent of $v$, we have 
\begin{subequations}
\begin{align}
	  &\|\Tt{X}-\Tt{X}^{k}\| \leq \hat{C_{1}}L^{k}\Delta t^{2k+k_0},\\
    	&\|\Tt{V}-\Tt{V}^{k}\|\leq \hat{C}_{2}L^{k}\Delta t^{2k+k_{0}-1}.
\end{align}	
\end{subequations}
with constants $\hat{C_{1}},\ \hat{C_{2}}$ independent of $\Delta t$.
\end{theorem}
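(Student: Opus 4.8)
The plan is to prove both pairs of estimates by induction on the iteration index $k$, handling position and velocity errors simultaneously. Writing $a_k := \|\Tt{X}-\Tt{X}^{k}\|$ and $b_k := \|\Tt{V}-\Tt{V}^{k}\|$ for brevity, the claimed bounds are just statements about the $\Delta t$-order of $a_k$ and $b_k$, with the explicit $L^{k}$ factor tracking the Lipschitz dependence. The base case $k=0$ simply records the accuracy of the procedure used to generate $\Tt{U}^0$: it is the $k=0$ instance of each asserted bound and holds by the definition of the starting order $k_0$ (Remark~\ref{remark:starting_values}), since $L^0=1$.

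For the inductive step in the velocity-dependent case I would begin from the two inequalities~\eqref{eq:ineq_nodes} of Proposition~\ref{prop:nodes}. The difficulty is that these do not yet form a clean recurrence: the bound for $a_k$ contains $b_k$ and the bound for $b_k$ contains $a_k$, so the current-index errors are coupled. The first real step is therefore to \emph{decouple} them. Substituting the inequality for $b_k$ into the one for $a_k$ (and conversely) reproduces self-referential terms $a_k$, $b_k$ on the right-hand sides, but every such term now carries an extra factor of $\Delta t$ — indeed $\Delta t^{3}$ for the doubly substituted contribution, whose coefficient is of the form $C_1C_2L^2\Delta t^3$. Invoking the smallness assumptions $\Delta t\le(1-\delta)/L$ and $\Delta t^2<1/3$ of Proposition~\ref{prop:nodes}, these coefficients are strictly below one, so the terms can be moved to the left-hand side, leaving the decoupled estimates
\begin{equation*}
a_k \le \bar{C}\,L\,\Delta t^{2}\,(a_{k-1}+b_{k-1}), \qquad b_k \le \bar{C}\,L\,\Delta t\,(a_{k-1}+b_{k-1})
\end{equation*}
with a constant $\bar{C}$ independent of $\Delta t$.

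With the recurrence decoupled the induction closes immediately. By the induction hypothesis $a_{k-1}=\mathcal{O}(L^{k-1}\Delta t^{k+k_0})$ and $b_{k-1}=\mathcal{O}(L^{k-1}\Delta t^{k+k_0-1})$, so the velocity term dominates and $a_{k-1}+b_{k-1}=\mathcal{O}(L^{k-1}\Delta t^{k+k_0-1})$. Multiplying by $L\Delta t^2$ and $L\Delta t$ gives $a_k=\mathcal{O}(L^{k}\Delta t^{k+k_0+1})$ and $b_k=\mathcal{O}(L^{k}\Delta t^{k+k_0})$, exactly the assertion; the single extra power of $\Delta t$ carried by the $a_k$-recurrence relative to the $b_k$-recurrence is precisely what keeps the position error one order ahead of the velocity error at every sweep. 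For the case where $\Td{f}$ is independent of $\Td{v}$ the argument is shorter, because the position inequality~\eqref{eq:indep_ineq_nodes_pos} is already decoupled, $a_k\le C_1 L\Delta t^2 a_{k-1}$; iterating it directly shows that each sweep now gains \emph{two} powers of $\Delta t$, so $a_k=\mathcal{O}(L^k\Delta t^{2k+k_0})$, and inserting this into~\eqref{eq:indep_ineq_nodes_vel} yields $b_k=\mathcal{O}(L^k\Delta t^{2k+k_0-1})$.

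I expect the decoupling in the velocity-dependent case to be the main obstacle: one must check that after substitution the coefficients of the current-index terms really are less than one, which is exactly where the quantitative step-size restrictions of Proposition~\ref{prop:nodes} are needed. Everything else is bookkeeping — keeping track that the constants accumulated through the induction depend on $k$ but remain finite and independent of $\Delta t$, and that the explicit factor $L^{k}$ is propagated correctly at each step.
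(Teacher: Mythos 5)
Your proposal is correct and follows essentially the same route as the paper: decouple the coupled inequalities of Proposition~\ref{prop:nodes} by mutual substitution, absorb the current-index terms using the step-size restrictions, and then propagate the order-$k_0$ starting error through the resulting recurrence (the paper does this via the explicit matrix power $M^k=(m_1+m_2)^{k-1}M$ rather than induction, but that is only a presentational difference). The only slight imprecision is your claim that the $k=0$ position bound is "the $k=0$ instance of the asserted bound" — the starting procedure only guarantees $\|\Tt{X}-\Tt{X}^0\|=\mathcal{O}(\Delta t^{k_0})$, not $\mathcal{O}(\Delta t^{k_0+1})$ — but this does not matter since the inductive step only uses the sum $a_{k-1}+b_{k-1}$, which is dominated by the velocity term.
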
 
\begin{proof}
First, consider a case where the right-hand side function $f$ does not depend on $v$.
Insert~\eqref{eq:indep_ineq_nodes_pos} into~\eqref{eq:indep_ineq_nodes_vel} to get
\begin{align*}
	\|\Tt{V}-\Tt{V}^{k}\|\leq 2L\Delta t(\|\Tt{X}-\Tt{X}^{k-1}\|+C_{1}L\Delta t^{2}\|\Tt{X}-\Tt{X}^{k-1}\|)= \\ =2L\Delta t\|\Tt{X}-\Tt{X}^{k-1}\|+2C_{1}L^{2}\Delta t^{3} \|\Tt{X}-\Tt{X}^{k-1}\|.
\end{align*}
As before,
\begin{equation*}
	\|\Tt{X}-\Tt{X}^{k}\|\leq C_{1}L\Delta t^{2}\|\Tt{X}-\Tt{X}^{k-1}\|,
\end{equation*}
such that
\begin{equation*}
	\|\Tt{V}-\Tt{V}^{k}\|\leq 2L\Delta t \|\Tt{X}-\Tt{X}^{k-1}\|+2C_{1}L^{2}\Delta t^{3} \|\Tt{X}-\Tt{X}^{k-1}\|.
\end{equation*}
By recursive insertion, we get
\begin{align}\label{eq:pos_X0}
	\|\Tt{X}-\Tt{X}^{k}\| &\leq C_{1}^{k}L^{k}\Delta t^{2k}\|\Tt{X}-\Tt{X}^{0}\|,\\\label{eq:vel_V0}
	\|\Tt{V}-\Tt{V}^{k}\| &\leq 2C_{1}^{k-1}L^{k} \Delta t^{2k-1} \|\Tt{X}-\Tt{X}^{0}\|+2C_{1}^{k} L^{k+1}\Delta t^{2k+1} \|\Tt{X}-\Tt{X}^{0}\|. 
\end{align}
For a starting value $\Tt{X}^{0}$ for the SDC iteration of order $k_0$ we have
\begin{subequations}\label{eq:k_0}
\begin{align}
	&\|\Tt{X}-\Tt{X}^{0}\|\leq C_{0}\Delta t^{k_{0}},\\
	&\|\Tt{V}-\Tt{V}^{0}\|\leq C_{0}\Delta t^{k_{0}}
\end{align}
\end{subequations}
where the constant $C_{0}$ is independent of $\Delta t$.
Taken together, we find that
\begin{align*}
\|\Tt{X}-\Tt{X}^{k}\| \leq C_{1}^{k}L^{k}\Delta t^{2k}\|\Tt{X}-\Tt{X}^{0}\| \leq C_{1}^{k}C_{0}L^{k}\Delta t^{2k+k_0}.
\end{align*}
Similarly, using~\eqref{eq:k_0} in equation \eqref{eq:vel_V0} we obtain
\begin{align*}
\|\Tt{V}-\Tt{V}^{k}\| &\leq 2C_{1}^{k-1}L^{k} \Delta t^{2k-1} \|\Tt{X}-\Tt{X}^{0}\|+2C_{1}^{k} L^{k+1}\Delta t^{2k+1} \|\Tt{X}-\Tt{X}^{0}\|  \\
	&\leq 2C_{1}^{k-1}C_{0}L^{k}\Delta t^{2k+k_{0}-1}+ 2C_{1}^{k}C_{0}L^{k+1}\Delta t^{2k+k_{0}+1} \\
	&=2C_{0}C_{1}^{k-1}(1+C_{1}L\Delta t ^{2})L^{k}\Delta t^{2k+k_{0}-1}.
\end{align*}
Since $\Delta t^{2}<\frac13$, the following estimate is valid
\begin{equation*}
2C_{0}C_{1}^{k-1}(1+C_{1}L\Delta t ^{2})\leq2C_{0}C_{1}^{k-1}(1+\frac{C_{1}L}{3})=:\hat{C_{2}}.
\end{equation*}
Thus,
\begin{align*}
	    &\|\Tt{X}-\Tt{X}^{k}\| \leq \hat{C_{1}}L^{k}\Delta t^{2k+k_0},\\
	    &\|\Tt{V}-\Tt{V}^{k}\|\leq \hat{C}_{2}L^{k}\Delta t^{2k+k_{0}-1}
\end{align*}
where $\hat{C}_{1}:=C_{1}^{k}C_{0}$.

For the general case where $\Td{f}$ depends on $\Td{v}$, we use estimate~\eqref{eq:ineq_nodes} in Proposition~\ref{prop:nodes}.
First, we insert $\|\Tt{X}-\Tt{X}^{k}\|$ from~\eqref{eq:ineq_nodes_pos} on the right--hand side of the inequality~\eqref{eq:ineq_nodes_vel} to get
\begin{align*}
	    \|\Tt{X}-\Tt{X}^{k}\| &\leq C_{1}L\Delta t^{2}(\|\Tt{X}-\Tt{X}^{k-1}\|+\|\Tt{V}-\Tt{V}^{k-1}\| \\
	    & \quad +C_{2}L\Delta t(\|\Tt{V}-\Tt{V}^{k-1}\|+\|\Tt{X}-\Tt{X}^{k-1}\|+\|\Tt{X}-\Tt{X}^{k}\|)) \\
	    &=(C_{1}L\Delta t^{2}+C_{1}C_{2}L^{2}\Delta t^{3})(\|\Tt{X}-\Tt{X}^{k-1}\|+\|\Tt{V}-\Tt{V}^{k-1}\|)\\
	    &\quad +C_{1}C_{2}L^{2}\Delta t^{3}\|\Tt{X}-\Tt{X}^{k}\|.
\end{align*}
If $\Delta t$ is small enough such that $1-C_{1} C_{2}L^{2}\Delta t^{3}\geq \tilde{\delta} >0$, then
\begin{equation}\label{eq:2.36}
	    \|\Tt{X}-\Tt{X}^{k}\|\leq \frac{C_{1}L\Delta t^{2}+C_{1}C_{2}L^{2}\Delta t^{3}}{1-C_{1}C_{2}L^{2}\Delta t^{3}}(\|\Tt{X}-\Tt{X}^{k-1}\|+\|\Tt{V}-\Tt{V}^{k-1}\|).
\end{equation}
Analogously, substitute the expression for $\|\Tt{V}-\Tt{V}^{k}\|$ from~\eqref{eq:ineq_nodes_vel} into~\eqref{eq:ineq_nodes_pos} to get
\begin{align*}
	    \|\Tt{V}-\Tt{V}^{k}\| &\leq C_{2}L\Delta t(\|\Tt{V}-\Tt{V}^{k-1}\|+\|\Tt{X}-\Tt{X}^{k-1}\| \\
	    &\quad +C_{1}L\Delta t^{2}(\|\Tt{X}-\Tt{X}^{k-1}\|+\|\Tt{V}-\Tt{V}^{k-1}\|+\|\Tt{V}-\Tt{V}^{k}\|)) \\
	    &=(C_{2}L\Delta t+C_{1}C_{2}L^{2}\Delta t^{3})(\|\Tt{X}-\Tt{X}^{k-1}\|+\|\Tt{V}-\Tt{V}^{k-1}\|)\\
	    &\quad +C_{1}C_{2}L^{2}\Delta t^{3}\|\Tt{V}-\Tt{V}^{k}\|.
\end{align*}
Hence,
\begin{equation}
	    \|\Tt{V}-\Tt{V}^{k}\|\leq \frac{C_{2}L\Delta t+C_{1}C_{2}L^{2}\Delta t^{3}}{1-C_{1}C_{2}L^{2}\Delta t^{3}}(\|\Tt{X}-\Tt{X}^{k-1}\|+\|\Tt{V}-\Tt{V}^{k-1}\|).
\end{equation}
Let
\begin{equation}
	\ m_{1}:=\frac{L\Delta t^{2}(C_{1}+C_{1}C_{2}L\Delta t)}{1-C_{1}C_{2}L^{2}\Delta t^{3}}, \ m_{2}:=\frac{L\Delta t(C_{2}+C_{1}C_{2}L\Delta t^{2})}{1-C_{1}C_{2}L^{2}\Delta t^{3}}.
\end{equation}
and we obtain the following system of inequalities
\begin{align*}
	    \|\Tt{X}-\Tt{X}^{k}\|\leq m_{1}(\|\Tt{X}-\Tt{X}^{k-1}\|+\|\Tt{V}-\Tt{V}^{k-1}\|),\\
	    \|\Tt{V}-\Tt{V}^{k}\|\leq m_{2}(\|\Tt{X}-\Tt{X}^{k-1}\|+\|\Tt{V}-\Tt{V}^{k-1}\|).
\end{align*}
These can be written in matrix form 
\begin{equation}
	    \begin{pmatrix}
	    \|\Tt{X}-\Tt{X}^{k}\|\\
	    \|\Tt{V}-\Tt{V}^{k}\|
	    \end{pmatrix} \leq \begin{pmatrix}
	    m_{1} & m_{1}\\
	    m_{2} & m_{2}
	    \end{pmatrix} \begin{pmatrix}
	    \|\Tt{X}-\Tt{X}^{k-1}\|\\
	    \|\Tt{V}-\Tt{V}^{k-1}\|
	    \end{pmatrix}.
\end{equation}
Recursive insertion yields 
\begin{equation}\label{eq:M_matrix}
	    \begin{pmatrix}
	    \|\Tt{X}-\Tt{X}^{k}\|\\
	    \|\Tt{V}-\Tt{V}^{k}\| 
	    \end{pmatrix}  \leq \begin{pmatrix}
	    m_{1} & m_{1}\\
	    m_{2} & m_{2}
	    \end{pmatrix}^{k} \begin{pmatrix}
	    \|\Tt{X}-\Tt{X}^{0}\|\\
	    \|\Tt{V}-\Tt{V}^{0}\| 
	    \end{pmatrix} =: M^k \begin{pmatrix}
	    \|\Tt{X}-\Tt{X}^{0}\|\\
	    \|\Tt{V}-\Tt{V}^{0}\| 
	    \end{pmatrix}
\end{equation}
It is easy to show by induction that $M^{k}=(m_{1}+m_{2})^{k-1}M$ so that~\eqref{eq:M_matrix} becomes
\begin{equation}
	    \left(\begin{matrix}
	    \|\Tt{X}-\Tt{X}^{k}\|\\
	    \|\Tt{V}-\Tt{V}^{k}\|
	    \end{matrix} \right)\leq \left(\begin{matrix}
	    (m_{1}+m_{2})^{k-1} m_{1} &(m_{1}+m_{2})^{k-1}  m_{1}\\
	    (m_{1}+m_{2})^{k-1} m_{2} &(m_{1}+m_{2})^{k-1}  m_{2}
	    \end{matrix} \right)\left(\begin{matrix}
	    \|\Tt{X}-\Tt{X}^{0}\|\\
	    \|\Tt{V}-\Tt{V}^{0}\|
	    \end{matrix} \right)
\end{equation}
or
\begin{align*}
	    \|\Tt{X}-\Tt{X}^{k}\|\leq m_{1}(m_{1}+m_{2})^{k-1}(\|\Tt{X}-\Tt{X}^{0}\|+\|\Tt{V}-\Tt{V}^{0}\|),\\
	    \|\Tt{V}-\Tt{V}^{k}\|\leq m_{2}(m_{1}+m_{2})^{k-1}(\|\Tt{X}-\Tt{X}^{0}\|+\|\Tt{V}-\Tt{V}^{0}\|).
\end{align*}
For $\Delta t<\frac{1}{\sqrt{3}}$ and $1-C_{1} C_{2}L^{2}\Delta t^{3}\geq \tilde{\delta} >0$, we can write
\begin{align*}
	\frac{C_{2}+C_{1}C_{2}L\Delta t^{2}}{1-C_{1}C_{2}L^{2}\Delta t^{3}}\leq \frac{3C_{2}+C_{1}C_{2}L}{3\tilde{\delta}}=:C_{3},\\
	\frac{C_{1}+C_{1}C_{2}L\Delta t}{1-C_{1}C_{2}L^{2}\Delta t^{3}}\leq \frac{\sqrt{3}C_{1}+C_{1}C_{2}L}{\sqrt{3}\tilde{\delta}}=:C_{4},
\end{align*}
and get the following inequalities
\begin{align*}
	    &\|\Tt{X}-\Tt{X}^{k}\|\leq C_{4}L\Delta t^{2}(C_{3}L\Delta t+C_{4}L\Delta t^{2})^{k-1}(\|\Tt{X}-\Tt{X}^{0}\|+\|\Tt{V}-\Tt{V}^{0}\|),\\
	    &\|\Tt{V}-\Tt{V}^{k}\|\leq C_{3}L\Delta t(C_{3}L\Delta t+C_{4}L\Delta t^{2})^{k-1}(\|\Tt{X}-\Tt{X}^{0}\|+\|\Tt{V}-\Tt{V}^{0}\|).
\end{align*}
Using the results from~\eqref{eq:k_0} yields
\begin{align*}
	    \|\Tt{X}-\Tt{X}^{k}\| &\leq L^{k}\Delta t^{k+1}C_{4}(C_{3}+C_{4}\Delta t)^{k-1}(\|\Tt{X}-\Tt{X}^{0}\|+\|\Tt{V}-\Tt{V}^{0}\|) \\
	    &\leq L^{k}\Delta t^{k+1}C_{4}(C_{3}+C_{4}\Delta t)^{k-1}(C_{0}\Delta t^{k_0}+C_{0}\Delta t^{k_0}) \\
	    &\leq 2C_{4}C_{0}(C_{3}+C_{4}\Delta t)^{k-1}L^{k}\Delta t^{k+k_{0}+1}.
\end{align*}
Similar computations can be done for the variable $\Tt{V}$ which yields
\begin{align*}
	    \|\Tt{V}-\Tt{V}^{k}\| &\leq  L^{k}\Delta t^{k}C_{3}(C_{3}+C_{4}\Delta t)^{k-1}(\|\Tt{X}-\Tt{X}^{0}\|+\|\Tt{V}-\Tt{V}^{0}\|)\\
	    &\leq 2C_{3}C_{0}(C_{3}+C_{4}\Delta t)^{k-1}L^{k}\Delta t^{k+k_{0}}.
\end{align*}
With $\Delta t< \frac{1}{\sqrt{3}}$, we find that
\begin{align*}
	2C_{4}C_{0}(C_{3}+C_{4}\Delta t)^{k-1}\leq 2C_{4}C_{0}(C_{3}+\frac{1}{\sqrt{3}}C_{4})^{k-1}:=\tilde{C_{1}}\\
	2C_{3}C_{0}(C_{3}+C_{3}\Delta t)^{k-1}\leq 2C_{3}C_{0}(C_{3}+\frac{1}{\sqrt{3}}C_{4})^{k-1}:=\tilde{C_{2}}
	\end{align*}
Therefore,
\begin{align*}
	&\|\Tt{X}-\Tt{X}^{k}\|\leq \tilde{C_{1}}L^{k} \Delta t^{k+k_0+1},\\
	&\|\Tt{V}-\Tt{V}^{k}\|\leq \tilde{C_{2}}L^{k} \Delta t^{k+k_0}.
\end{align*}	
which completes the proof.
\end{proof}

\bibliographystyle{siamplain}
\bibliography{references}
\end{document}